\documentclass[trsc]{informs3} %

\renewcommand{\theARTICLETOP}{}
\renewcommand{\theRRHSecondLine}{}
\renewcommand{\theLRHSecondLine}{}

\OneAndAHalfSpacedXI %

\usepackage{natbib}
 \bibpunct[, ]{(}{)}{,}{a}{}{,}%
 \def\bibfont{\small}%

\TheoremsNumberedThrough     %

\EquationsNumberedThrough    %

\usepackage[utf8]{inputenc}

\usepackage{bm}
\usepackage{bbm}
\usepackage{bbold}
\usepackage{hyperref}
\usepackage{soul}
\usepackage{enumitem}

\usepackage{algorithm}
\usepackage{algpseudocode}

\usepackage{nicefrac}

\usepackage{cleveref}

\usepackage{array}
\usepackage{booktabs}
\usepackage{multirow}
\usepackage{makecell}

\usepackage{tikz}
\usepackage{tikz-network}
\usetikzlibrary{shapes, arrows, positioning }
\usepackage{subcaption}

\usepackage{color} 
\newcount\Comments
\newcommand{\kibitz}[2]{\ifnum\Comments=1{\color{#1}{#2}}\fi}

\allowdisplaybreaks

\begin{document}

\RUNAUTHOR{Shen et al.} %

\RUNTITLE{On-Trip Matching and Pricing}

\TITLE{On-Trip Matching and Pricing for Shared Rides}

\ARTICLEAUTHORS{%
\AUTHOR{Yifan Shen}
\AFF{Department of Industrial and Systems Engineering, University of Washington, Seattle, \EMAIL{syf1996@uw.edu}}
\AUTHOR{Junlin Chen}
\AFF{Department of Industrial Engineering, Tsinghua University, Beijing, \EMAIL{cjl21@mails.tsinghua.edu.cn}}
\AUTHOR{Julia Yan}
\AFF{Sauder School of Business, University of British Columbia, Vancouver, \EMAIL{julia.yan@sauder.ubc.ca}}
\AUTHOR{Chiwei Yan}
\AFF{Department of Industrial Engineering and Operations Research, University of California, Berkeley, \EMAIL{chiwei@berkeley.edu}} %
} %

\ABSTRACT{
Although shared rides have the potential to increase vehicle utilization and reduce congestion and emissions, these benefits depend heavily on ridesharing platforms' ability to match riders effectively. As such, shared rides have seen limited success outside of dense urban areas---the sparse outskirts of greater metropolitan areas remain underserved. In the literature, the dominant matching model involves collecting rider requests in a batch interval and solving a non-bipartite matching problem on the requests. However, this model neglects the ability of a rider to be matched to a future arriving rider even after she is initially dispatched solo; namely, matching is only modeled \emph{pre-trip}, and the value of \emph{on-trip} matching is not explicitly accounted for. We develop a dynamic, stochastic matching model, where the platform makes both pre-trip and on-trip matching decisions, and contrast the behavior of each phase of matching.  Using both synthetic and real-world data from Chicago, we find that whereas pre-trip matching is well-suited to dense downtown areas with concentrated demand, on-trip matching is critical in sparser outskirts where demand is spatially dispersed, and manages a tradeoff between matching opportunity and value.
We also embed the matching model in an outer pricing optimization problem to study the interaction of matching with pricing, and find that the addition of on-trip matching increases profitability and efficiency for the platform and lowers prices for riders. These effects are particularly pronounced in the sparse outskirts, where operating shared rides---even providing access to any form of transportation---has historically been most challenging.
}

\KEYWORDS{shared rides, matching, pricing, accessibility}
\HISTORY{First version, 02/2026.}

\maketitle

\section{Introduction}

Shared rides, which match compatible riders together in one vehicle, have the potential to increase vehicle utilization and reduce congestion and emissions \citep{agatz2012optimization}. However, although platforms such as Uber and Lyft have offered shared rides since 2014, these products have long struggled with profitability and service quality \citep{uber_zombie}. 

A central challenge is that the success of shared rides relies heavily on local demand density, which directly affects the availability of quality matches \citep{allon2022Pool}. 
Platforms acknowledge that ``matching with a co-rider is dependent on ... the number of ride requests ... available in a given area" \citep{uber_newshare_gb}, and limit their shared ride offerings accordingly. UberX Share is only available in select major cities with high demand \citep{uber_intro_share}, and Lyft, after discontinuing shared rides entirely in 2023, pivoted in 2025 to offering shared rides only at major airports where demand is concentrated \citep{lyft_newsshare_revive}.
Consequently, the benefits of shared rides have not been realized outside of dense urban centers; notably, lower-density, outlying areas also tend to coincide with transit deserts \citep{jiao2013transit,aman2020transit}. An analysis of Toronto ridesharing data finds that downtown Toronto accounts for 66\% of total matches with a match rate of 62\%, whereas the match rate in the outlying areas drops to only 39\% \citep{young2020True}.

\begin{figure}[htbp]
     \centering
     \begin{subfigure}[b]{0.4\textwidth}
        \begin{center}  \includegraphics[width=\textwidth]{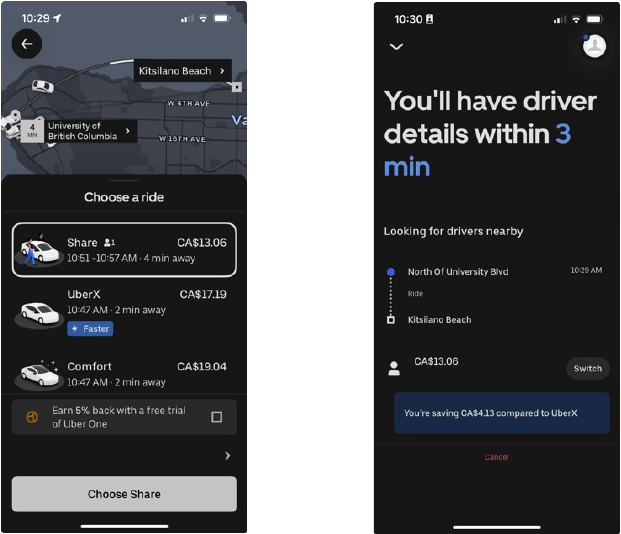}
          \caption{Pre-trip}
          \label{fig:ui-pretrip}
        \end{center}
     \end{subfigure}
     \hspace{2cm}
     \begin{subfigure}[b]{0.4\textwidth}
        \begin{center}  \includegraphics[width=0.45\textwidth]{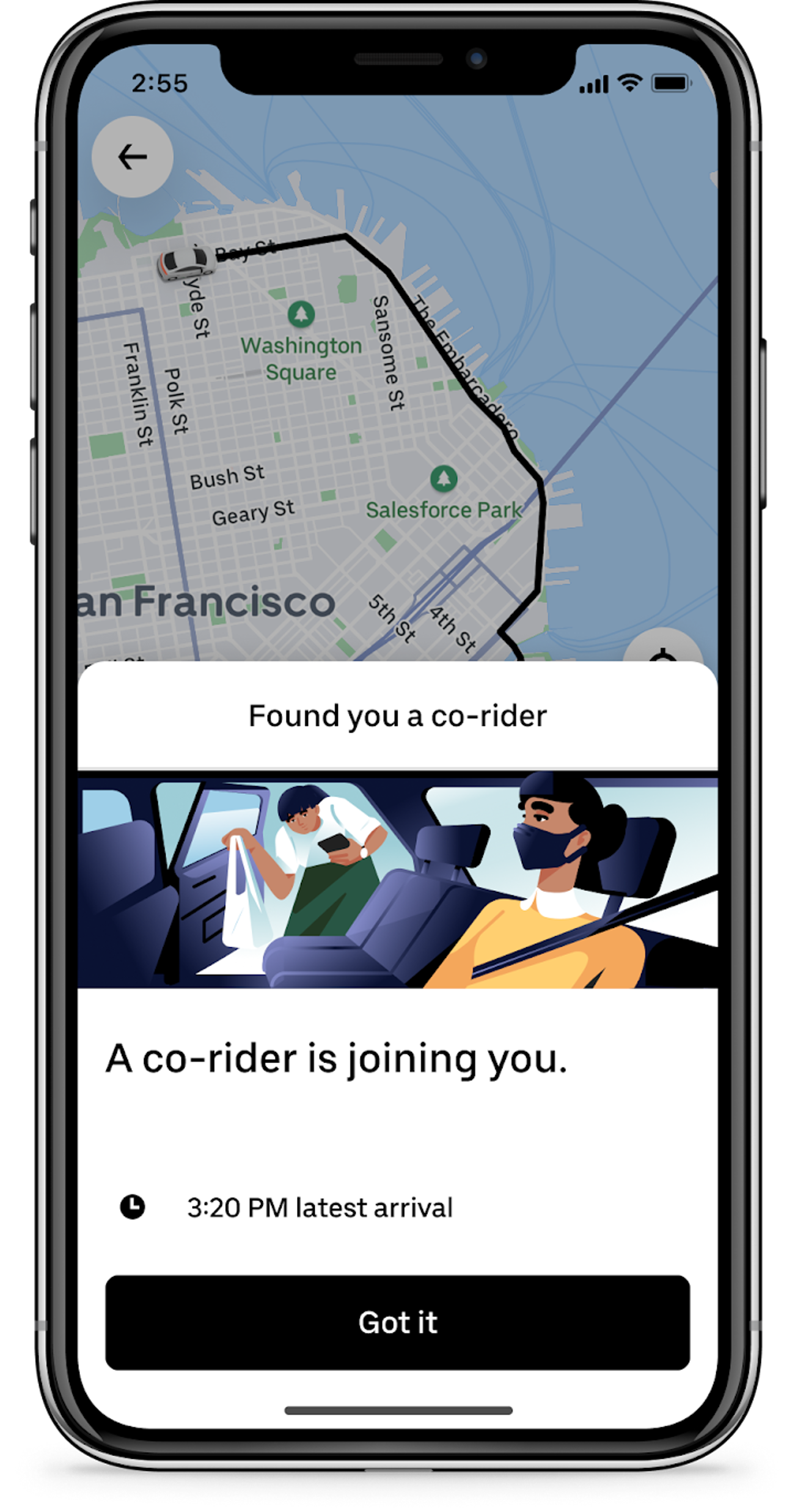}
          \caption{On-trip}
          \label{fig:ui-ontrip}
        \end{center}
     \end{subfigure}
    \caption{Screenshots of Uber app with both pre-trip and on-trip matching.}
    \label{fig:user-interface}
\end{figure}

Although platforms can attempt to increase demand density by asking riders to wait longer for a ride, the resulting inconvenience can deter riders who expect high-quality, on-demand service. Hence, a goal for platforms is to %
increase matches without excessively inconveniencing riders, especially in low-demand areas. To this end, the platform must capitalize on all matching opportunity---which, in practice, can arise not just before a request is dispatched on a trip (\Cref{fig:ui-pretrip}), but during the trip as well (\Cref{fig:ui-ontrip}). 
However, the dominant matching model in the literature, which is based on non-bipartite matching \citep{santi2014quantifying}, neglects the latter. In particular, an underlying assumption in this model is that matching is done \emph{pre-trip}, before riders are dispatched; then, unmatched riders are dispatched in solo rides. 
Matching opportunity that could arise \emph{on-trip}, when a rider is already en-route from origin to destination, is sometimes enabled through reoptimization with updated states for partially-occupied vehicles \citep{alonso2017demand}; however, this phase is usually not modeled in upstream matching decisions, despite their interconnection. %

This modeling gap raises the questions of first, how to model and solve on-trip matching problems, and second, when on-trip matching can be valuable for the platform. Under both pre-trip and on-trip matching, riders arrive and depart dynamically over time; hence, a key decision is whether to commit to a match now or wait for a (possibly) better match later. This tension has additional nuance when both types of matching are considered jointly. First, the addition of on-trip matching can make pre-trip matches less likely, as the rider still has opportunity to match after the end of the pre-trip phase. Second, as the rider progresses on-trip from origin to destination, both the length of the trip time available for matching and the compatibility and value of potential matches decrease, adding urgency to the matching decision. An associated challenge in modeling on-trip matching is that evaluating the value of waiting for a match requires tracking riders over the entire duration of their trips.

In addition to investigating the relationship between pre-trip and on-trip matching, we also investigate the interaction of pricing with matching. Pricing and matching decisions are intertwined: low prices can increase demand and encourage matching, and high-quality matching can reduce dispatch costs and allow the platform to lower prices. Hence, we investigate the impact of on-trip matching on pricing, and the associated accessibility and equity implications for riders.

In our model, riders dynamiclly arrive, request rides in response to prices quoted by the platform, and wait a limited time for a match to arrive (pre-trip phase). If a rider is dispatched solo, then the platform can continue to evaluate potential matches until she reaches her destination (on-trip phase).  %
The platform seeks to maximize long-run average profit. In this setting, we make the following contributions. 
\begin{enumerate}
    \item \textbf{A joint pre-trip and on-trip matching model.} We formulate a fluid relaxation that incorporates both pre-trip and on-trip matching decisions. This model captures a key feature of on-trip matching that if a rider is dispatched solo after the pre-trip phase, she can continue to match, but her position, compatibility, and matching values with other riders evolve along the trip, which is not represented in standard matching models with fixed types. We use the solution to the fluid relaxation to implement a dual-based matching policy. Moreover, we incorporate our matching model into an outer pricing optimization problem. %
    \item \textbf{When to match on-trip?} In our computational results, we demonstrate the contrasting behavior of pre-trip and on-trip matching. Pre-trip matching is primarily about matching value, seeking ``ideal" matches with identical origin-destination pairs, hence is favorable when demand is concentrated and riders are highly compatible with each other. In contrast, on-trip matching is favorable when demand is heterogeneous and spatially dispersed, as happens in greater metropolitan areas.
    \item \textbf{Evaluation on real-world data.} We evaluate our matching and pricing policies with large-scale simulations on Chicago ridesharing data. We show that adding on-trip matching improves profits and efficiency for the platform, and lowers prices for riders. The gains are especially pronounced in the outskirts of the city, where demand is less dense and more spatially dispersed. These areas also tend to have lower median incomes and less access to transit---hence, the lower prices meaningfully impact access to shared rides in historically underserved areas.
\end{enumerate}

The remainder of the paper is outlined as follows. We review the related literature in \Cref{sec:literature}. \Cref{sec:model} describes the models, and \Cref{sec:computational} reports computational results on both synthetic and real-world instances. We offer concluding remarks in \Cref{sec:conclusion}.

\section{Literature Review} \label{sec:literature}

Shared rides face a key operational challenge of deciding which riders should be pooled together in the same vehicle. The dominant modeling approach involves dynamic, stochastic matching on a non-bipartite graph: the nodes represent riders, the edges have weight corresponding to the cost (or reward) of dispatching the associated riders together, and a minimum-cost (or maximum-reward) matching determines the dispatches of riders. Riders arrive and depart\footnote{Departure is also sometimes called \emph{abandonment} in the literature. The time between arrival and departure is called the \emph{sojourn time}, and riders' limited sojourn time is said to reflect their \emph{impatience}.} dynamically over time, and the key decision involves whether to commit to a match now or wait for a (possibly) better match later. 

A common approach to solving such problems involves \emph{greedily} matching riders upon arrival. 
In a setting where riders depart with some probability after each period, \cite{eom2024batching} find that a greedy algorithm is asymptotically optimal for reward maximization in a large-market setting, if riders are not too impatient. An alternative approach involves \emph{batching} rider requests over short windows of time (e.g., 3 minutes in \Cref{fig:ui-pretrip}), and then solving the non-bipartite matching problem on the batch \citep{santi2014quantifying}. \cite{eom2024batching} additionally find that batching, like greedy, is asymptotically optimal if rider impatience is not too high; however, batching requires more patience from riders than greedy.  In a setting where sojourn times are uniform and constant, \cite{pavone2022online} and \cite{ashlagi2023edge} find that batching (with randomization to prevent the batch window from splitting up compatible matches) attains a constant-factor guarantee.
However, in the setting where rider sojourn times follow an exponential distribution, \cite{collina2020dynamic} and \cite{aouad2022dynamic} highlight the importance of making matching decisions at different timescales for riders of different types, even when departure rates are uniform. Hence, \cite{aouad2022dynamic} find that batching can perform arbitrarily poorly for cost minimization, and develop a greedy matching policy that attains a constant-factor guarantee.   

In this literature, the key decision of whether to commit or wait highlights the importance of market thickness in enabling effective matches. Accordingly, a related upstream problem involves measures to coalesce rider demand and thicken the market. To this end, riders can be asked to wait longer \citep{yan2020dynamic}, walk to designated meeting points \citep{stiglic2015benefits,fielbaum2021demand,zhang2023routing}, or accept longer detours \citep{stiglic2015benefits}. However, such asks can also deter riders from taking shared rides because of the associated inconvenience \citep{daganzo2020analysis,lobel2025detours}. Alternatively, pricing can encourage usage of shared rides over solo rides \citep{hu2020share,jacob2021ride,fielbaum2022split,wang2024Optimizingc,taylor2023shared} or to convert riders who can make high-quality matches \citep{yan2023pricing}. 

Although the non-bipartite matching formulation has been the focus of the shared rides matching literature, the cost structure has an inherent, underlying restriction that all matching happens \emph{pre-trip}. Namely, if a request is not matched, then it is dispatched on a solo trip, and remains solo thereafter. In practice, however, requests can continue to be matched while \emph{on-trip}. When requests can be matched on-trip, a routing problem also arises, as the shortest route may not be the one that maximizes the probability of finding a subsequent match \citep{zhu2025Route}. The continued matching ability is important for high-capacity settings in which more than two riders can be pooled together \citep{alonso2017demand,simonetto2019Realtime}---or even low-capacity settings when demand is sparse, as we discuss in \Cref{sec:computational}. 
Previous approaches usually consider simplified, static problems where on-trip decisions are made through reoptimization \citep{alonso2017demand}, insertion heuristics \citep{simonetto2019Realtime}, or greedy assignment %
\citep{wang2024Optimizingc}. As such, the key decision of whether to commit or wait is not explicitly considered. 

Our paper uses a dual-based matching policy that, like earlier works in dynamic stochastic matching, considers the value of a current match versus the value of waiting for a future match. Relative to prior work, our paper most closely relates to \cite{yan2023pricing}, which analyzes a pricing policy that offers a discount ex-post to encourage high-quality matches. 
We similarly consider the interaction of pricing and matching policies, and our dual-based matching policy also bears resemblance to their multiple-origin-destination matching policy. However, whereas \cite{yan2023pricing} focuses on (1) a single origin-destination setting where matching decisions are trivial, and (2) a multiple-origin-destination setting with pre-trip matching only, we explicitly model the on-trip matching phase with multiple origin-destinations.  
A challenge with modeling on-trip matching is that dispatched requests and their associated matching compatibilities and values evolve over the course of their trips. Consequently, previous work on pre-trip matching with fixed edge weights cannot be applied directly. We additionally discuss the impact of on-trip matching on both the pre-trip matching phase and the associated pricing policy, and highlight when each phase of matching is most useful.

\section{Modeling}
\label{sec:model}
In this section, we describe our model. Although the full framework encompasses a \emph{combined matching model} (allowing both pre-trip and on-trip matching), we focus our analysis in the main text on the pure on-trip matching model for ease of exposition. The combined matching model is detailed in Appendix \ref{sec:model_combined}. 

We consider a discrete-time, infinite-horizon setting where the time is indexed by $t \in \mathbbm{Z}_{\geq 0}:=\{0,1,2,\cdots\}$. There are $N$ distinct rider types, indexed by $i \in [N] := \{1, \dots, N\}$, where each type corresponds to a unique origin-destination pair $(O_i, D_i)$. The system is operated with a sufficiently large number of vehicles to serve all rider requests. All vehicles travel at a constant speed, and we define one unit of distance as the distance traveled during one unit of time. The platform incurs an operational cost proportional to the distance traveled by vehicles when serving riders. The unit cost is denoted by $c$ per unit of distance. The platform can match two riders into the same vehicle to share a trip and thereby minimize the total operational cost.

The time horizon is finely discretized such that at most one new rider arrives in the system during any time period $t \in \mathbbm{Z}_{\geq 0}$. For each $i \in [N]$, let $\Lambda_i \in [0,1]$ denote the probability that a new rider of type-$i$ arrives at a given time {period}. Since arrivals are mutually exclusive across types, arrivals follow a \emph{categorical distribution}, and the probability that no riders arrive is given by $1 - \sum_{i \in [N]} \Lambda_i \geq 0$.\footnote{We can further assume that the time discretization is sufficiently fine so that $\sum_{i \in [N]} \Lambda_i \ll 1$. Although this condition is not strictly required for the modeling formulation, it enables a convenient approximation of the rider arrival process by a Poisson process when calibrating arrivals in the computational experiments (\Cref{sec:computational}).
} Each type-$i$ rider's willingness to pay (WTP) is drawn independently and identically from a cumulative distribution function $F_i(\cdot)$. The rider makes a request (i.e., converts) if their WTP is at least as high as the quoted price $p_i$. Define $\lambda_i:=1 - F_i(p_i) \in [0,1]$ as the \textit{conversion probability}. The probability of a type-$i$ rider making a request at a given time period is therefore $\Lambda_i \lambda_i$. We further assume that $F_i(\cdot)$ is strictly increasing, such that there is always a one-to-one correspondence between $\lambda_i$ and $p_i$. Consequently, optimizing prices is equivalent to optimizing $\lambda_i$ for all $i \in [N]$. We denote by $p_i(\lambda_i)$ the price that induces conversion probability $\lambda_i$ for all subsequent analysis.

When a new rider converts and is not immediately matched with another rider, they enter the system as an \emph{on-trip solo rider}: the platform dispatches a vehicle for a solo ride, and this rider begins traveling toward their destination along the shortest path. For the on-trip solo rider, the platform can conduct on-trip matching to match this rider with another new rider, forming a shared ride. 
We assume that each rider may be matched with at most one other rider throughout the trip. 

For the type-$i$ rider, denote the solo trip length from $O_i$ to $D_i$ by $\ell_i$ units of distance, which equivalently represents a total travel time of $\ell_i$ time periods. 
We define the current state of an on-trip solo rider using the tuple $(i, u)$, where $i \in [N]$ is their rider type and the state variable $u$ is an integer satisfying $1 \leq u \leq \ell_i-1$, denoting the number of time periods elapsed since departure (equivalently, the travel distance already covered along the shortest path). For example, $u=1$ indicates that {the rider converted at the previous period and this is the first period of the solo trip}; and $u=\ell_i-1$ indicates that the rider will reach the destination at the next time period and exit the system. For notational convenience, define $\mathcal{C}:=\{(i,u)\}_{i\in[N], u \in [\ell_i-1]}$ to be the set of all states of on-trip solo riders. If a new rider of type $i \in [N]$ is matched with an on-trip solo rider $(j, u) \in \mathcal{C}$, denote the (remaining) shared trip length by $\ell_{i,j}^u$. The shared trip starts from the current location of the on-trip solo rider $(j,u)$, then visits $O_i$ to pick up the second rider, and then drops off the two riders at $D_i$ and $D_j$ in the order that minimizes the travel distance (either $D_i\to D_j$ or $D_j\to D_i$). 

\paragraph{Riders' Matching Compatibility.} An on-trip solo rider can only be matched with a new rider if they are compatible. Let $\mathcal{N}_{j,u}^+$ be the set of rider types compatible with a solo rider $(j,u) \in \mathcal{C}$. Generally, the platform can use various metrics to define the compatible set. In this paper, we set two conditions for a solo rider $(j,u)$ to be compatible with a new rider of type $i$: 
\begin{itemize}
\item \emph{Trip Length Condition}: \begin{align*}
     \ell_{i,j}^u <  \ell_{i}+\ell_j-u,
\end{align*}
that is, the shared trip length must be smaller than the summation of remaining solo trip lengths for $i$ and $(j,u)$;
\item \emph{Backtracking Condition}: 
$$(j,u) \text{ should not be on any shortest path from } O_i \text{ (exclusive) to } D_j.$$
Otherwise, picking up the new type-$i$ rider could later involve \emph{backtracking}, i.e., traversing nodes that the solo rider $(j,u)$ had already visited, creating a bad experience for the solo rider $(j,u)$.
\end{itemize}

\begin{figure}[htbp]
     \centering
     \begin{subfigure}[b]{0.4\textwidth}
        \begin{center}  \includegraphics[width=\textwidth]{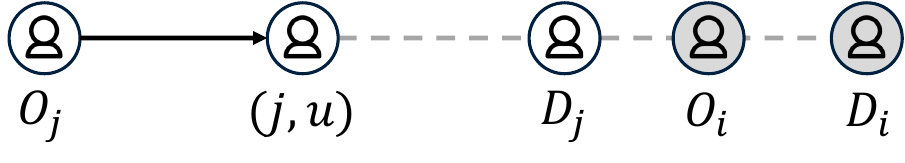}
          \caption{Violation of trip length condition}
          \label{fig:compatible_1}
        \end{center}
     \end{subfigure}
     \hspace{2cm}
     \begin{subfigure}[b]{0.4\textwidth}
        \begin{center}  \includegraphics[width=\textwidth]{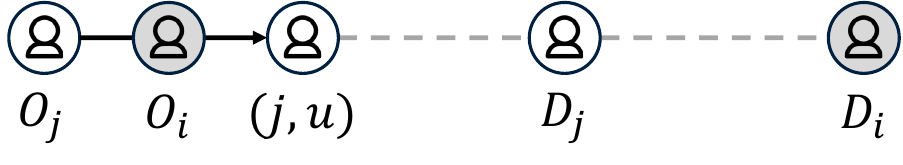}
          \caption{Violation of backtracking condition}
          \label{fig:compatible_2}
        \end{center}
     \end{subfigure}
    \caption{Illustrations of incompatibility between a solo rider $(j,u)$ and a new rider of type $i$.}
    \label{fig:compatible}
\end{figure}

Figure \ref{fig:compatible} illustrates two scenarios where a solo rider $(j,u)$ is incompatible with a new rider of type $i$. In {\Cref{fig:compatible_1}}, the trip length condition is violated because the shared trip length (i.e., the distance from $(j,u)$ to $D_i$) exceeds the sum of the two individual trip lengths. In {\Cref{fig:compatible_2}}, although the trip length condition is satisfied, $(j,u)$ is now between $O_i$ and $D_j$, meaning that matching $(j,u)$ with the new type-$i$ rider requires traveling back toward $O_i$ first, {violating} the backtracking condition.

One can easily verify that matching $(j,u)$ with a new rider of the same type $j$ will always violate the backtracking condition; thus, the platform should never match two riders of the same type in the on-trip matching. This behavior contrasts with pre-trip matching, where same-type matches are ideal. %

\subsection{Markov Decision Process}
\label{ss:model_mdp}

The optimal pricing and matching problem is formulated as an average-reward, discrete-time
Markov Decision Process (MDP). Let the state of the system at time $t\in\mathbbm{Z}_{\geq 0}$ be $\boldsymbol{\mathrm{s}}_t=(\mathrm{s}_{t,i,u})_{(i,u)\in\mathcal{C}} \in \mathcal{S}$, where $\mathrm{s}_{t,i,u}\in\mathbbm{Z}_{\geq 0}$ is the number of solo riders in state $(i,u)\in\mathcal{C}$ at time $t$ and $\mathcal{S}$ is the state space. 
The platform makes decisions on the pricing and matching policies:
\begin{itemize}
    \item \textit{Pricing Policy.} The platform implements a static (i.e., state-independent) pricing policy, by controlling the conversion probability $\lambda_i \in [0,1]$ for rider type $i\in[N]$. Denote $\boldsymbol{\lambda}:=\lambda_{[N]}$, the pricing policy for all rider types.
    \item \textit{Matching Policy.} The platform implements a state-dependent matching policy $\phi_i: \mathcal{S} \mapsto \mathcal{C}\cup\{0\}$. When a new rider of type $i\in[N]$ converts at time $t$, given the current system state $\boldsymbol{\mathrm{s}}_t$, the platform either (i) immediately matches the new rider with an existing on-trip solo rider $(j,u)\in\mathcal{C}$ such that $i\in\mathcal{N}_{j,u}^+$ and $\mathrm{s}_{t,j,u}>0$, or (ii) dispatches the new rider a solo ride following the shortest route, where we denote as action $0$. Denote $\boldsymbol{\phi}:=\phi_{[N]}$, the matching policy for all rider types.
\end{itemize}

Since the platform instantly dispatches all new requests and at most one new rider makes a request at each time period, we have $\mathrm{s}_{t,i,u}\in\{0,1\}$, that is, there is at most one solo rider {for each rider type at each possible position} at any time. Therefore, the state space $\mathcal{S}$ is finite, with $|\mathcal{S}| = 2^{|\mathcal{C}|}$. The state transition from $\boldsymbol{\mathrm{s}}_t$ to $\boldsymbol{\mathrm{s}}_{t+1}$ depends on the realization of the stochastic rider arrival, conversion, and the platform's {matching decision} at time $t$. 
Specifically, {at state $\boldsymbol{\mathrm{s}}_t$,} one of three mutually exclusive events happens: 
\begin{itemize}
    \item {No rider makes a request (with probability $1 - \sum_{k \in [N]} \Lambda_k \lambda_k$) at time $t$.} In this case, we have $$
    {\mathrm{s}}_{t+1,i,u} = \begin{cases}
        1, & \text{if } u \ge 2 \text{ and }\mathrm{s}_{t,i,u-1}=1,\\
        0, & \text{otherwise,}
    \end{cases} \quad \forall (i,u) \in \mathcal{C},
    $$
    that is, all existing solo riders move forward. Note that riders in state $(i, \ell_i - 1)$ exit the system at time $t+1$ and are implicitly not included in $
    \boldsymbol{\mathrm{s}}_{t+1}$.
    \item A type-$j$ rider makes a request (with probability $\Lambda_j \lambda_j$) at time $t$ and the platform's decision is to let them wait. In this case, we have the following state transition $$
    {\mathrm{s}}_{t+1,i,u} = \begin{cases}
        1, & \text{if } u = 1 \text{ and }i=j,\\
        1, & \text{if } u \geq 2 \text{ and }\mathrm{s}_{t,i,u-1}=1,\\
        0, & \text{otherwise,}
    \end{cases} \quad \forall (i,u) \in \mathcal{C},
    $$
    that is,  the new type-$j$ rider is added into state $(j,1)$ at time $t+1$.
    \item A type-$j$ rider requests (with probability $\Lambda_j \lambda_j$) at time $t$ and the platform's action is to match them with an existing solo rider $(k,v)\in\mathcal{C}$. In this case, we have $$
    {\mathrm{s}}_{t+1,i,u} = \begin{cases}
        1, & \text{if } u \geq 2, (i,u)\neq(k,v) \text{ and }\mathrm{s}_{t,i,u-1}=1,\\
        0, & \text{otherwise},
    \end{cases} \quad \forall (i,u) \in \mathcal{C},
    $$
    that is, the solo rider $(k,v)$ forms a shared ride and leaves the system.
\end{itemize}

In the following analysis, we consider stationary policies and thus replace $\boldsymbol{\mathrm{s}}_t=(\mathrm{s}_{t,i,u})_{(i,u)\in\mathcal{C}}$ with $\boldsymbol{\mathrm{s}}=(\mathrm{s}_{i,u})_{(i,u)\in\mathcal{C}}$ by dropping subscript $t$.

Under stationary pricing and matching policies $\boldsymbol{\lambda}, \boldsymbol{\phi}$, 
let $\{A_{t}^{\boldsymbol{\lambda}, \boldsymbol{\phi}}\}_{t\ge0}$ denote the counting process of the number of riders who end up solo (i.e., reach the destination without being matched) up to time $t$.
For $n\in [A_{t}^{\boldsymbol{\lambda}, \boldsymbol{\phi}}]$, let $i_n\in[N]$ denote the type of the $n$-th rider. Let $\{M_{t}^{\boldsymbol{\lambda}, \boldsymbol{\phi}}\}_{t\ge0}$ denote the counting process of the number of matches, and for the $n$-th match, $n\in[M_{t}^{\boldsymbol{\lambda}, \boldsymbol{\phi}}]$, let $i_n\in[N]$ denote the type of newly converted rider and $(j_n, u_n)\in\mathcal{C}$ denote the state of the existing {on-trip} solo rider. The total profit up to time $t$ is given by
\begin{align*}
    \Pi_{t}^{\boldsymbol{\lambda}, \boldsymbol{\phi}} = \sum_{n\in[A_{t}^{\boldsymbol{\lambda}, \boldsymbol{\phi}}]} \left[ p_{i_n}(\lambda_{i_n}) - c\ell_{i_n} \right] + \sum_{n\in[M_{t}^{\boldsymbol{\lambda}, \boldsymbol{\phi}}]} \left[ p_{i_n}(\lambda_{i_n}) + p_{j_n}(\lambda_{j_n}) -c 
    u_n -c \ell_{i_n,j_n}^{u_n} \right], 
\end{align*}
where the first term is the total profit collected from riders that end up solo, and the second term is the total profit collected from riders that are successfully matched. Note that for the matched riders, this total distance is composed of the solo distance already covered by $(j_n, u_n)$, which is $u_n$, and the remaining shared trip distance, $\ell_{i_n, j_n}^{u_n}$.

The objective of the platform is to maximize long-run average profit, given by
\begin{align*}
    \Pi:= \sup_{\boldsymbol{\lambda}, \boldsymbol{\phi}} \lim_{t\rightarrow+\infty} \frac{1}{t} \mathbbm{E} [\Pi_{t}^{\boldsymbol{\lambda}, \boldsymbol{\phi}}],
\end{align*}
where the expectation is taken over all the randomness, including rider arrivals, conversions, and matching decisions.

Under any pricing and matching policy, the induced Markov chain is finite and a unichain (i.e., every recurrent state is reachable from the empty state $\boldsymbol{\mathrm{s}}=\boldsymbol{0}$), ensuring the existence of an optimal stationary policy (see, e.g., Theorem 8.4.5 in \citealt{puterman2014markov}). However, due to the exponential growth of the state space size $|\mathcal{S}| = 2^{|\mathcal{C}|}$ with respect to the number of rider types $N$ and the trip lengths $(\ell_i)_{i\in[N]}$, solving this MDP exactly is computationally intractable. Therefore, the subsequent analysis focuses on a fluid relaxation of this problem.

\subsection{Fluid Model}
\label{ss:model_fluid}

We now formulate a fluid relaxation of the MDP model, which provides an upper bound on the optimal MDP profit achieved under the best static pricing and matching policy (as stated in Proposition \ref{prop:fluid_bound}). Define the flow variables $x_{i,j}^u$ for all $(j, u)\in\mathcal{C}$ and $i\in\mathcal{N}_{j,u}^+$, representing the average rate of matching new riders of type $i$ with existing solo riders $(j,u)$ per unit of time. Define $y_i^u$ as the average rate at which existing solo riders $(i,u)\in\mathcal{C}$ do not get matched {in this particular period} and continue solo. Specifically, $y_i^{\ell_i-1}$ corresponds to the flow of type-$i$ riders who end up solo. We also define $y_i^{0}$ as the average rate of new riders of type $i$ that are not immediately matched and become on-trip solo riders, for all $i\in[N]$.

The profit maximization problem is formulated as a bilevel optimization:
\begin{align}
g:=\max_{\boldsymbol{\lambda}\in[\boldsymbol{0},\boldsymbol{1}]} \quad \sum_{i\in[N]} \Lambda_i\lambda_i p_i(\lambda_i) -  C(\boldsymbol{\lambda}), \label{eq:profit_fluid} 
\end{align}
where the first term represents the expected revenue collected per unit time, and $C(\boldsymbol{\lambda})$ is the inner-level cost minimization problem, given by
\begin{subequations}
\begin{align}
    (\textrm{\textsf{CB}}(\boldsymbol{\lambda}))\quad\quad\quad \notag\\ 
    C(\boldsymbol{\lambda}) := \quad \min_{\boldsymbol{x}, \boldsymbol{y}} & \quad \sum_{i\in[N]} \sum_{u=0}^{\ell_i-1} c y_{i}^u + \sum_{i\in[N]} \sum_{\substack{(j,u)\in\mathcal{C}:\:\\i\in \mathcal{N}_{j,u}^+}} c \ell_{i,j}^u x_{i,j}^u   \label{eq:fluid_obj} \\
    \textrm{s.t.}&\quad \sum_{\substack{
    (j,u)\in\mathcal{C}:\:\\ i\in\mathcal{N}^+_{j,u}
    }}  x_{i,j}^u + y_i^{0} = \Lambda_i\lambda_i, & \forall i\in[N],\label{eq:fluid_demand}\\
    & \quad \sum_{i\in\mathcal{N}_{j,u}^+}  x_{i,j}^u + y_j^u = y_j^{u-1}, & \forall (j,u)\in\mathcal{C}, \label{eq:fluid_flow_balance}\\
    & \quad \Lambda_i\lambda_i  y_j^u \ge \left( 1-\sum_{k\in\mathcal{N}^+_{j,u}} \Lambda_k\lambda_k \right) x_{i,j}^u, & \forall (j,u)\in\mathcal{C}, i\in\mathcal{N}^+_{j,u}, \label{eq:fluid_ratio} \\
    & \quad x_{i,j}^u \ge 0, & \forall (j,u)\in\mathcal{C}, i\in\mathcal{N}^+_{j,u}, \notag\\
    & \quad y_i^u \ge 0, & \forall i\in[N], 0 \leq u \leq \ell_i-1, u\in\mathbbm{Z}. \notag
\end{align} \label{eq:cb}
\end{subequations}
The objective function \eqref{eq:fluid_obj} represents the average total cost per unit time. The first term corresponds to the cost incurred by on-trip solo riders, while the second term captures the cost of serving matched trips. Constraints \eqref{eq:fluid_demand} and \eqref{eq:fluid_flow_balance} impose steady-state flow balance conditions. Specifically, \eqref{eq:fluid_demand} gives a flow balance for new requests of type $i\in[N]$ with the conversion rate of $\Lambda_i\lambda_i$---they are either matched with existing solo riders $(j,u)\in\mathcal{C}$ such that $i\in\mathcal{N}_{j,u}^+$, resulting in a flow of $x_{i,j}^u$, or {dispatched solo}, resulting in a flow of $y_i^{0}$. Constraint \eqref{eq:fluid_flow_balance} gives the flow balance for {on-trip} solo riders $(j,u)\in\mathcal{C}$---under the steady state, the inflow of {on-trip} solo riders entering the state $(j,u)$ is $y_j^{u-1}$, and they are either matched with new requests $i\in\mathcal{N}_{j,u}^+$, resulting in a flow of $x_{i,j}^u$, or continue solo, resulting in a flow of $y_j^u$.
Constraint \eqref{eq:fluid_ratio} is a set of ratio constraints that limit the match rate relative to the solo flow, ensuring that the match rate is not overestimated in the relaxation. To get a clearer view of the ratio constraints, we rearrange \eqref{eq:fluid_ratio} as follows:
\begin{align}
\frac{y_j^u}{x_{i,j}^u} \ge \frac{1-\sum_{k\in\mathcal{N}^+_{j,u}} \Lambda_k\lambda_k}{\Lambda_i\lambda_i}, \quad \forall (j,u)\in\mathcal{C}, i\in\mathcal{N}^+_{i,u}. \label{eq:fluid_ratio_re}
\end{align}
Intuitively, the term $1-\sum_{k\in\mathcal{N}^+_{j,u}} \Lambda_k\lambda_k$ is the probability that no compatible riders convert in a given time period, which means that the rider $(j,u)$ must continue solo and is thus associated with the solo flow $y_j^u$. The term $\Lambda_i\lambda_i$ is the probability that a new rider $i$ converts and is related to the match flow $x_{i,j}^u$, since matching can only happen when a new rider $i$ converts. Combining these two terms gives the ratio constraint \eqref{eq:fluid_ratio_re}.

The following proposition states that the optimal objective value of the fluid relaxation provides an upper bound on the optimal long-run average profit of the MDP model.
\begin{proposition} \label{prop:fluid_bound}
$g \geq \Pi$.    
\end{proposition}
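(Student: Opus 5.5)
Fix any stationary pricing policy $\boldsymbol{\lambda}$ and stationary matching policy $\boldsymbol{\phi}$. By the unichain property and finiteness of $\mathcal{S}$, the induced chain has a stationary distribution $\pi$. The plan is to build a feasible solution $(\boldsymbol{x},\boldsymbol{y})$ to $\textsf{CB}(\boldsymbol{\lambda})$ whose cost equals the long-run average cost of $(\boldsymbol{\lambda},\boldsymbol{\phi})$. Since expected revenue per period is exactly $\sum_i \Lambda_i\lambda_i p_i(\lambda_i)$ (every converted rider pays, whether solo or matched), this gives $\lim_t \frac1t\mathbbm{E}[\Pi_t^{\boldsymbol{\lambda},\boldsymbol{\phi}}] \le \sum_i\Lambda_i\lambda_i p_i(\lambda_i) - C(\boldsymbol{\lambda}) \le g$. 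Taking the supremum, and using that an optimal stationary policy exists (Theorem 8.4.5 in \citealt{puterman2014markov}), yields $g\ge \Pi$.

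Define the flows as stationary probabilities:
\[
x_{i,j}^u := \Lambda_i\lambda_i\,\pi\{\mathbf{s}:\phi_i(\mathbf{s})=(j,u)\},\qquad y_j^u := \pi\{\mathbf{s}: \mathrm{s}_{j,u}=1\} - \sum_{i} x_{i,j}^u \quad (u\ge1),
\]
and set $y_i^0 := \Lambda_i\lambda_i\,\pi\{\mathbf{s}:\phi_i(\mathbf{s})=0\}$. Thus $y_j^u$ is the stationary probability that $(j,u)$ is present and not matched this period.
\begin{itemize}
\item Constraint \eqref{eq:fluid_demand} holds because each converted type-$i$ rider is either matched or dispatched solo.
\item Constraint \eqref{eq:fluid_flow_balance} follows from stationarity: $\pi(\mathrm{s}_{j,u}=1)$ equals the probability that at the previous step a rider was in $(j,u-1)$ and unmatched, which is $y_j^{u-1}$. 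For $u=1$ this is the probability that a new type-$j$ rider went solo, which is $y_j^0$.
\end{itemize}

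For the cost, charge the per-period cost $c$ to each solo rider for each period they ride solo. A rider who ends solo pays $c\ell_i$, i.e., $c$ for each $u=0,\dots,\ell_i-1$. A rider matched at $(j,u)$ has already paid $cu$ through the $y_j^0,\dots,y_j^{u-1}$ terms and then pays $c\ell_{i,j}^u$. By the ergodic theorem, the average cost then equals objective \eqref{eq:fluid_obj}. Nonnegativity is immediate.

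The main obstacle is the ratio constraint \eqref{eq:fluid_ratio}. Let $E_{j,u}=\{\mathbf{s}:\mathrm{s}_{j,u}=1\}$. Arrivals at time $t$ are independent of $\mathbf{s}_t$, so conditional on $\mathbf{s}\in E_{j,u}$:
\begin{itemize}
\item with probability $1-\sum_{k\in\mathcal{N}^+_{j,u}}\Lambda_k\lambda_k$, no compatible rider converts, and $(j,u)$ necessarily continues solo;
\item with probability $\Lambda_i\lambda_i$, a type-$i$ rider converts, and only then can $x_{i,j}^u$ accrue.
\end{itemize}
Hence $y_j^u \ge (1-\sum_{k}\Lambda_k\lambda_k)\pi(E_{j,u})$ and $x_{i,j}^u \le \Lambda_i\lambda_i\,\pi(E_{j,u})$. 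Multiplying the first by $\Lambda_i\lambda_i$ and combining with the second gives \eqref{eq:fluid_ratio}. One point to verify is that $\phi_i(\mathbf{s})=(j,u)$ requires $\mathrm{s}_{j,u}=1$, which holds by the definition of feasible actions.

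If randomized stationary policies are allowed, the same argument goes through with $\pi\{\phi_i(\mathbf{s})=(j,u)\}$ replaced by the expected selection probability.
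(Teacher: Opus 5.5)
Your proof is correct and follows essentially the same route as the paper: you build a feasible point of $\textsf{CB}(\boldsymbol{\lambda})$ from the policy's steady-state flows, obtain flow balance from conservation, obtain the ratio constraints by conditioning on $(j,u)$ being present (a match requires a type-$i$ conversion, while no compatible conversion forces continuation), and match the objective by per-period cost charging, which stands in for the paper's telescoping over \eqref{eq:fluid_flow_balance}. The differences are presentational: you use the stationary distribution instead of time-averaged counting processes, and you bound every stationary policy rather than a presumed optimal one, so you do not need the supremum to be attained.
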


\subsection{Pricing and Matching} \label{ss:pricing_matching}

With the help of {the} fluid relaxation, we can {avoid} the curse of dimensionality and design scalable algorithms that yield effective pricing and matching solutions. Let $x_{i,j}^u(\boldsymbol{\lambda})$ and $y_i^u(\boldsymbol{\lambda})$ denote the optimal primal solutions of $\textrm{\textsf{CB}}(\boldsymbol{\lambda})$, and let $\gamma_i(\boldsymbol{\lambda})$, $\xi_j^u(\boldsymbol{\lambda})$ and $\eta_{i,j}^u(\boldsymbol{\lambda})$ denote the optimal dual variables associated with constraints \eqref{eq:fluid_demand}, \eqref{eq:fluid_flow_balance} and \eqref{eq:fluid_ratio}, respectively. According to the Envelope Theorem (see, e.g., Corollary 5 in \citealt{milgrom2002envelope}), when $C(\boldsymbol{\lambda})$ is differentiable at $\boldsymbol{\lambda}$, its gradient, denoted by $\nabla C(\boldsymbol{\lambda})$, can be given as
\begin{align*}
    \nabla C(\boldsymbol{\lambda}) &= \left[ \Lambda_i \gamma_i(\boldsymbol{\lambda}) - \Lambda_i \sum_{\substack{(j,u)\in\mathcal{C}: \\ i\in\mathcal{N}^+_{j,u}}} \Big( y_j^u \eta_{i,j}^u(\boldsymbol{\lambda}) + \sum_{k\in\mathcal{N}^+_{j,u}} x_{k,j}^{u} \eta_{k,j}^u(\boldsymbol{\lambda}) \Big) \right]_{i\in[N]}.
\end{align*}
For the case when $C(\boldsymbol{\lambda})$ is nondifferentiable at $\boldsymbol{\lambda}$,  we continue to use the notation of $\nabla C(\boldsymbol{\lambda})$, which can be interpreted as the subgradient or supergradient (whenever such gradients exist) that informs the ascent direction.

The pricing optimization problem formulated in \eqref{eq:profit_fluid} is, in general, non-convex. Recent work by \cite{chen2025linear} demonstrates the effectiveness of the Minorization-Maximization (MM) algorithm for a similar fluid matching model, subject to both flow balance and ratio constraints. Therefore, in this paper, we adapt the MM algorithm proposed in \cite{chen2025linear}. 
Let $\boldsymbol{\lambda}^{(k)}$ denote the solution estimate after $k$ iterations. In the $(k+1)$-th iteration, we construct a surrogate function $Q(\boldsymbol{\lambda} \mid \boldsymbol{\lambda}^{(k)})$ defined as follows:
\begin{align}
Q(\boldsymbol{\lambda} \mid \boldsymbol{\lambda}^{(k)}) &= \sum_{i\in\mathcal{N}} \Lambda_i\lambda_i p_i(\lambda_i) - \nabla C(\boldsymbol{\lambda}^{(k)})^{\top} (\boldsymbol{\lambda} - \boldsymbol{\lambda}^{(k)}). \label{eq:mm}
\end{align}
In \eqref{eq:mm}, we construct the linear function $\nabla C(\boldsymbol{\lambda}^{(k)})^{\top} (\boldsymbol{\lambda} - \boldsymbol{\lambda}^{(k)})$ to replace $C(\boldsymbol{\lambda})$. We then perform an iteration by letting $\boldsymbol{\lambda}^{(k+1)} \gets \arg\max_{\boldsymbol{\lambda} \in [\boldsymbol{0},\boldsymbol{1}]} Q(\boldsymbol{\lambda} \mid \boldsymbol{\lambda}^{(k)})$. Under the standard revenue management assumption that $\lambda_i p_i(\lambda_i)$ is concave, maximizing $Q(\boldsymbol{\lambda} \mid \boldsymbol{\lambda}^{(k)})$ becomes a convex optimization problem and can thus be solved efficiently. We repeat this procedure until the MM iteration ceases to improve the objective value. This method is numerically robust; we find empirically that it converges to consistent conversion values starting from different random initial solutions.

After obtaining the pricing result, denoted by $\hat{\boldsymbol{\lambda}}$, we implement a heuristic dual-based matching policy to conduct the on-trip matching.
Whenever a new rider of type $i\in[N]$ converts, let
$$
\kappa_i(\hat{\boldsymbol{\lambda}}):=\min_{(j,u)\in\mathcal{C}: i\in\mathcal{N}_{j,u}^+, s_{j,u}=1} \{ c\ell_{i,j}^u -\xi_j^u(\hat{\boldsymbol{\lambda}})\},
$$
then the matching policy is given by
\begin{align} \label{equ:dispatching_policy}
    \phi_i(\boldsymbol{\mathrm{s}}) \in \begin{cases}
        \arg\min_{(j,u)\in\mathcal{C}: i\in\mathcal{N}_{j,u}^+,\: \mathrm{s}_{j,u}=1} \{ c\ell_{i,j}^u -\xi_j^u(\hat{\boldsymbol{\lambda}})\}, & \text{if } \gamma_i(\hat{\boldsymbol{\lambda}}) \geq \kappa_i(\hat{\boldsymbol{\lambda}}),\\
        \{0\}, & \text{otherwise.}
    \end{cases}
\end{align}

Intuitively speaking, the dual variable $\gamma_i(\hat{\boldsymbol{\lambda}})$, associated with \eqref{eq:fluid_demand}, measures the marginal cost of \emph{not} letting a new type-$i$ rider match immediately, and the dual variables $\xi_j^u(\hat{\boldsymbol{\lambda}})$, associated with \eqref{eq:fluid_flow_balance}, measures the marginal cost of not matching the {on-trip} solo rider $(j,u)$ (i.e., keeping them solo). If matching the new type-$i$ rider with the {on-trip} solo rider $(j,u)$, it results in a (generalized) cost of $c\ell_{i,j}^u -\xi_j^u(\hat{\boldsymbol{\lambda}})$ 
. Therefore, the best matching will be the one that results in a minimum cost of $\kappa_i(\hat{\boldsymbol{\lambda}})$. If $\kappa_i(\hat{\boldsymbol{\lambda}})$ is no larger than $\gamma_i(\hat{\boldsymbol{\lambda}})$, matching is preferred; otherwise, the new rider should be dispatched solo.

\smallskip

\paragraph{Remarks on the Model without Ratio Constraints.} In our fluid model, the ratio constraints in \eqref{eq:fluid_ratio} are introduced to prevent overestimation of match rates. The following proposition demonstrates that relaxing these ratio constraints may indeed lead to overly restrictive matching policies.

    \begin{proposition} \label{prop:policy_on_trip_wo_ratio}
        When the ratio constraints \eqref{eq:fluid_ratio} are relaxed, there exists an optimal dual solution such that 
        $
            \xi_{i}^u(\hat{\boldsymbol{\lambda}})=\gamma_i(\hat{\boldsymbol{\lambda}})-cu, \ \forall (i,u)\in\mathcal{C}.
        $
        In this case, the matching policy \eqref{equ:dispatching_policy} reduces to a rule where a new rider of type $i \in [N]$ is matched with an on-trip solo rider $(j,u) \in \mathcal{C}$ only if $i\in\mathcal{N}_{j,u}^+$ and $u$ satisfies
        $
        \ell_{i,j}^{u} + u = \ell_{i,j}^1 + 1.
        $
    \end{proposition}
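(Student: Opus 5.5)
We will work with the LP $\textrm{\textsf{CB}}(\hat{\boldsymbol{\lambda}})$ with the ratio constraints \eqref{eq:fluid_ratio} deleted, so that only the flow-balance constraints \eqref{eq:fluid_demand}--\eqref{eq:fluid_flow_balance} and nonnegativity remain. The plan is to write down its dual, exhibit a feasible dual solution with $\xi_i^u=\gamma_i-cu$, and show it is optimal via complementary slackness with a primal optimum. Substituting these duals into \eqref{equ:dispatching_policy} then yields the stated rule.

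\textbf{Step 1: the dual.} Attach $\gamma_i$ to \eqref{eq:fluid_demand} and $\xi_j^u$ to \eqref{eq:fluid_flow_balance}. We set $\xi_j^0:=\gamma_j$ and $\xi_j^{\ell_j}:=0$ as conventions. The dual maximizes $\sum_i \Lambda_i\hat\lambda_i\gamma_i$ subject to two families of constraints.
\begin{itemize}
\item From the columns $y_j^u$: $\xi_j^u-\xi_j^{u+1}\le c$ for $0\le u\le \ell_j-1$.
\item From the columns $x_{i,j}^u$: $\gamma_i+\xi_j^u\le c\ell_{i,j}^u$.
\end{itemize}
The $y$-constraints chain along each trip. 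Summing them gives $\gamma_j\le c\ell_j+\xi_j^{\ell_j}$, and more generally $\xi_j^u\ge \gamma_j-cu$ is forced by the telescoping inequalities from $0$ to $u$. The tightest choice compatible with keeping every $y$-column at zero reduced cost is $\xi_j^u=\gamma_j-cu$ for all $u$, i.e.\ a constant decrease of $c$ per period. Under it, the constraint $\xi_j^{\ell_j-1}-0\le c$ reads $\gamma_j\le c\ell_j$.

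\textbf{Step 2: optimality.} Take any optimal dual $(\gamma^\ast,\xi^\ast)$. We keep $\gamma^\ast$, which fixes the objective value, and replace $\xi$ by $\tilde\xi_j^u=\gamma_j^\ast-cu$. We must check two things.
\begin{enumerate}
\item $\gamma^\ast_j\le c\ell_j$. This holds because $\gamma^\ast_j-\xi^{\ast,\ell_j-1}_j\le c(\ell_j-1)$ by telescoping, and $\xi^{\ast,\ell_j-1}_j\le c$.
\item $\gamma^\ast_i+\gamma^\ast_j-cu\le c\ell_{i,j}^u$ for all compatible pairs.
\end{enumerate}
Item 2 is the main obstacle, since the $x$-constraints alone only give $\gamma_i^\ast+\xi_j^{\ast u}\le c\ell_{i,j}^u$. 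The approach is to use monotonicity in $u$. The sequence $\xi^{\ast u}_j+cu$ is nondecreasing, so $\tilde\xi$ is pointwise no larger than $\xi^\ast$ adjusted at the base. That alone does not suffice, so instead I would construct $\gamma$ directly from primal structure. Without ratio constraints, the primal decomposes into a min-cost flow in which each type-$j$ solo stream travels along $u=0,1,\dots$ and may be absorbed by a compatible new rider at any $u$ with total cost $cu+c\ell_{i,j}^u$. I would argue that, by triangle-type inequalities on shortest paths, the primal optimum places all match flow at the earliest compatible position. Complementary slackness on the positive $y$-arcs before the match then forces $\xi_j^u=\gamma_j-cu$ exactly, and the dual can be completed there. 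If the triangle argument fails in general, the fallback is to choose $\gamma$ as the optimal potentials of this min-cost flow and verify reduced costs directly.

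\textbf{Step 3: the policy.} With $\xi_j^u=\gamma_j-cu$, the matching cost in \eqref{equ:dispatching_policy} is
\[
c\ell_{i,j}^u-\xi_j^u = c(\ell_{i,j}^u+u)-\gamma_j.
\]
Dual feasibility gives $\gamma_i\le c(\ell_{i,j}^{u'}+u')-\gamma_j$ for every compatible $u'$, in particular for $u'=1$. Hence the threshold $\gamma_i\ge\kappa_i$ can hold for $(j,u)$ only if $c(\ell^u_{i,j}+u)$ attains the minimal value. Combining this with the nondecreasing nature of $\ell_{i,j}^u+u$ in $u$, which follows from the triangle inequality since detouring from a later point costs at least as much as from an earlier one, shows that the minimum equals $\ell_{i,j}^1+1$. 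Matching therefore occurs only when $\ell_{i,j}^u+u=\ell_{i,j}^1+1$. Along the way, tightness $\gamma_i+\gamma_j=c(\ell_{i,j}^1+1)$ is needed for the policy to match at all, and is not needed for the "only if" direction.
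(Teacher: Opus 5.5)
Your overall route is the paper's: fix an optimal $\gamma$, set $\xi_j^u=\gamma_j-cu$, then combine the policy threshold with the dual constraint at $(j,1)$ and the monotonicity of $\ell_{i,j}^u+u$. There is one genuine gap, in Step 3. Step 2 is also left unfinished, but the fix is already in what you wrote.

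\textbf{Step 2 is complete as soon as you note the pointwise bound.} You show that telescoping the $y$-constraints gives $\xi_j^{\ast u}\ge\gamma_j^\ast-cu=\tilde\xi_j^u$, and then say this ``alone does not suffice.'' It does suffice. $\xi_j^u$ sits with a plus sign on the left of $\gamma_i+\xi_j^u\le c\ell_{i,j}^u$, so $\gamma_i^\ast+\tilde\xi_j^u\le\gamma_i^\ast+\xi_j^{\ast u}\le c\ell_{i,j}^u$. With your item 1, $(\gamma^\ast,\tilde\xi)$ is therefore feasible with an unchanged objective, hence optimal. This is exactly the paper's lemma: lower the $\xi$'s until the $y$-constraints bind.

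The replacement you sketch is not needed, and as written it is not carried out. It would argue that the primal optimum puts all match flow at the earliest compatible position and read $\xi$ off complementary slackness (or re-choose $\gamma$ as flow potentials). It would also fall short on its own terms: complementary slackness pins $\xi_j^u$ only on arcs carrying positive solo flow.

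\textbf{The gap: compatibility at $(j,1)$.} In Step 3 you apply the dual constraint ``for every compatible $u'$, in particular for $u'=1$.'' The constraint $\gamma_i+\xi_j^1\le c\ell_{i,j}^1$ exists only if $i\in\mathcal{N}_{j,1}^+$. You never show that compatibility with $(j,u)$ implies compatibility with $(j,1)$. Without it, your argument only yields that $\ell_{i,j}^u+u$ equals the minimum of $\ell_{i,j}^{u'}+u'$ over compatible $u'$. That minimum need not be $\ell_{i,j}^1+1$, so the stated conclusion does not follow.

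The paper proves the missing fact as a separate lemma, with two parts:
\begin{enumerate}
\item The trip-length condition transfers, because $\ell_{i,j}^1\le\ell_{i,j}^u+u-1<\ell_i+\ell_j-1$. This uses your monotonicity of $\ell_{i,j}^u+u$.
\item The backtracking condition transfers. Suppose $(j,1)$ lay on a shortest path from $O_i$ to $D_j$. Follow that path to $(j,1)$, then take $j$'s own shortest route through $(j,u)$ to $D_j$. The result is a shortest $O_i$--$D_j$ path through $(j,u)$, which contradicts compatibility at $(j,u)$.
\end{enumerate}

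With this lemma in place, your Step 3 is the paper's argument. The threshold gives $\gamma_i+\gamma_j\ge c(\ell_{i,j}^u+u)\ge c(\ell_{i,j}^1+1)$. The $(j,1)$ constraint gives the reverse inequality. Equality then forces $\ell_{i,j}^u+u=\ell_{i,j}^1+1$.
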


    Under the matching policy given in Proposition \ref{prop:policy_on_trip_wo_ratio}, the platform will match riders $i$ and $j$ only if the current location of rider $j$ (i.e., the value of $u$) is ``optimal", in the sense that the realized total trip length $\ell_{i,j}^u + u$ must equal the minimum possible length achievable one time period after $j$ departs, which is $\ell_{i,j}^1 + 1$. This requirement is practically unrealistic. Unless $O_j$ lies exactly on one shortest path from $O_i$ to $D_i$, the total trip length typically varies with $u$, making this equality condition nearly impossible to satisfy for any $u > 1$. Therefore, this proposition confirms that the ratio constraints \eqref{eq:fluid_ratio} play a critical role in regulating feasible match rates relative to solo rates, thereby avoiding overly optimistic (or restrictive) matching policies.

\section{Computational Experiments}
\label{sec:computational}

In this section, we evaluate the performance of a  matching policy (and corresponding pricing policy) that {combines} pre-trip and on-trip matching, which reflects ridesharing dispatch in practice. Under this matching policy, each request first waits on the platform for a suitable match for up to $T$ periods; if it remains unmatched, it is dispatched solo, and can continue to be matched on-trip. First, to understand the scenarios that favor each component of the combined policy, we compare the performance of pre-trip and on-trip matching on small synthetic examples  (\Cref{ss:synthetic}). Then, we present large-scale computational results for the combined policy on real-world ridesharing data in Chicago (\Cref{ss:chicago}).

The combined policy extends the pure on-trip matching model described in Section \ref{sec:model}; we present its fluid formulation, along with the corresponding pricing and matching policies, in Appendix \ref{sec:model_combined}. The pure pre-trip matching policy, along with its corresponding pricing policy, is implemented using the model for the combined policy, but letting $\mathcal{N}_{j,u}^+=\emptyset$ for all $u> 0$ for all $j\in[N]$. 

Although we usually refer to our policies as the combined, pre-trip, and on-trip matching policies for brevity, all of our experiments compute matching and pricing policies \emph{jointly}.

\subsection{Synthetic Instances} \label{ss:synthetic}

We first compare the profits of the pre-trip and on-trip matching policies (with pricing) in two synthetic examples. For both instances, we use cost $c=0.7$ and total per-period arrival probability $\sum_{i=1}^N \Lambda_i = 0.1$, which is equally divided among all rider types. 
Following \cite{bimpikis2019spatial} and \cite{yan2023pricing}, we let riders’ willingness to pay be uniformly distributed in [0, 1], and thus $\lambda_i = 1 - p_i$ for each rider type $i \in [N]$. We consider pre-trip waiting periods $T\in\{1,3,5,7,10\}$. %
To evaluate each policy, we simulate each policy on synthetic arrivals over $10^7$ periods and compute the resulting profit.%

\medskip
\paragraph{Example 1 (Two rider types: one long and one short).} The first synthetic instance uses two rider types illustrated in \Cref{fig:2od}. The two rider types, each with the same arrival probability $\Lambda = 0.05$ per period, travel from left to right along a line toward a common destination. One rider type has a longer length of $L = 100$ periods, and the other has a shorter length of $l \in \{1,\cdots, L\}$ periods. To represent the difference between the two rider types, we define a \emph{demand heterogeneity}  parameter $\Delta=1-l/L\in[0, 1]$: a smaller $\Delta$ indicates that the riders are more similar, with  $\Delta=0$ corresponding to two riders with identical origin-destinations, who form an ``ideal" match.

\begin{figure}[htbp]
     \centering
     \begin{subfigure}[b]{0.45\textwidth}
        \begin{center}  \includegraphics[height=5cm]{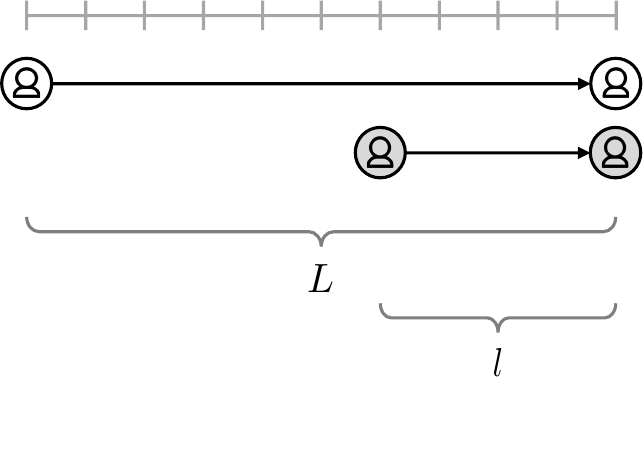}
          \caption{Two rider types.}
          \label{fig:2od}
        \end{center}
     \end{subfigure}
     \begin{subfigure}[b]{0.5\textwidth}
        \begin{center}  \includegraphics[height=6cm]{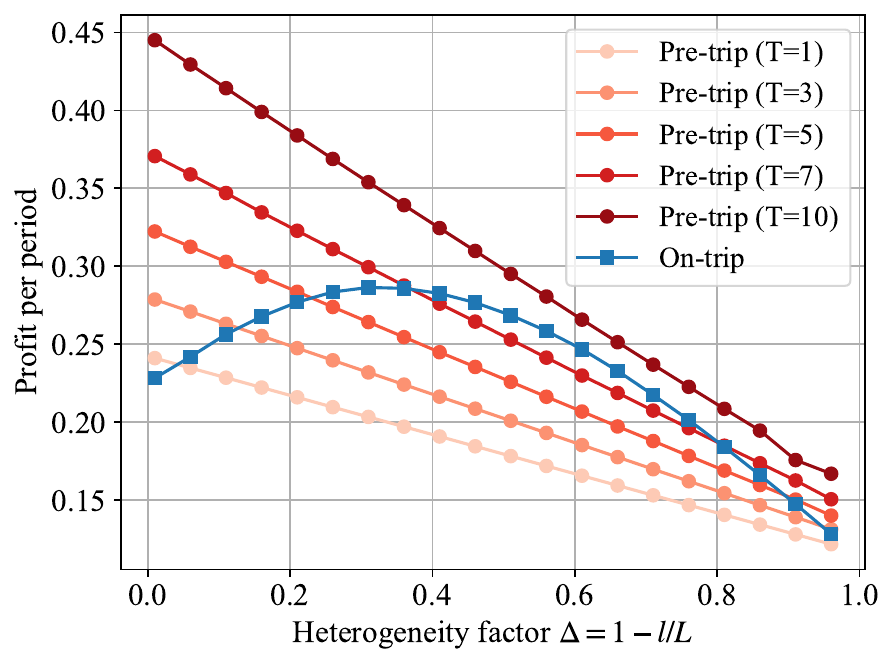}
          \caption{Profits of pre-trip and on-trip matching.}
          \label{fig:heterogeneity_2od}
        \end{center}
     \end{subfigure}
    \caption{Illustration of the two-rider-type example, and profit comparison between pre-trip and on-trip matching under varying demand heterogeneity. %
    }
    \label{fig:example1}
\end{figure}

\Cref{fig:heterogeneity_2od} compares the profits of pre-trip matching (shades of red) and on-trip matching (blue) for varying $\Delta$. Naturally, the pre-trip matching profit increases in $T$, because a longer waiting period coalesces demand more effectively and enables better matches. More importantly, the pre-trip matching profit decreases monotonically in $\Delta$, whereas the on-trip matching profit first increases, then decreases with $\Delta$. This contrasting behavior highlights the different strengths of the two matching policies. Pre-trip matching is primarily concerned with matching \emph{value} (i.e., the cost saved by matching two riders together), with the ideal scenario being $\Delta = 0$, a single origin-destination. Hence, when the demand is homogeneous ($\Delta$ is low), the matching value is high, and pre-trip matching performs well. In contrast, on-trip matching exhibits a trade-off between matching value and matching \emph{opportunity} (i.e., the likelihood of a match occurring). When the demand is homogeneous ($\Delta$ is low), the matching value is high, but the matching opportunity is low. This is because the distance between the two riders' origins is short, and once the vehicle passes the short rider's origin, it can no longer match the riders because of the backtracking condition. Meanwhile, when the demand is heterogeneous ($\Delta$ is high), the matching opportunity is high, but the matching value is low. At a moderate level of demand heterogeneity, both matching opportunity and matching value are present, benefiting on-trip matching. Thus, it is the importance of matching opportunity, as opposed to matching value alone, that drives the differing behavior of on-trip versus pre-trip matching.

\medskip
\paragraph{Example 2 ($N$ rider types on a grid network).} Next, we illustrate the impact of demand heterogeneity on a larger instance. Now, we have a $10\times10$ grid network, where each edge length is ten periods. Keeping the total per-period arrival probability fixed as $\sum_{i=1}^N \Lambda_i = 0.1$, we generate $N$ distinct rider types of equal length $L = 100$, varying $N$ from 1 to 30, and generating five instances for each $N$. \Cref{fig:heterogeneity_od_diagram} shows three examples of rider types generated for $N = 1,5$, and $10$. As $N$ increases, the demand becomes more spatially dispersed, and we can examine the effect of the dispersion on pre-trip and on-trip matching.

\begin{figure}[htbp]
     \centering
     \begin{subfigure}[b]{0.25\textwidth}
        \begin{center}  \includegraphics[width=\textwidth]{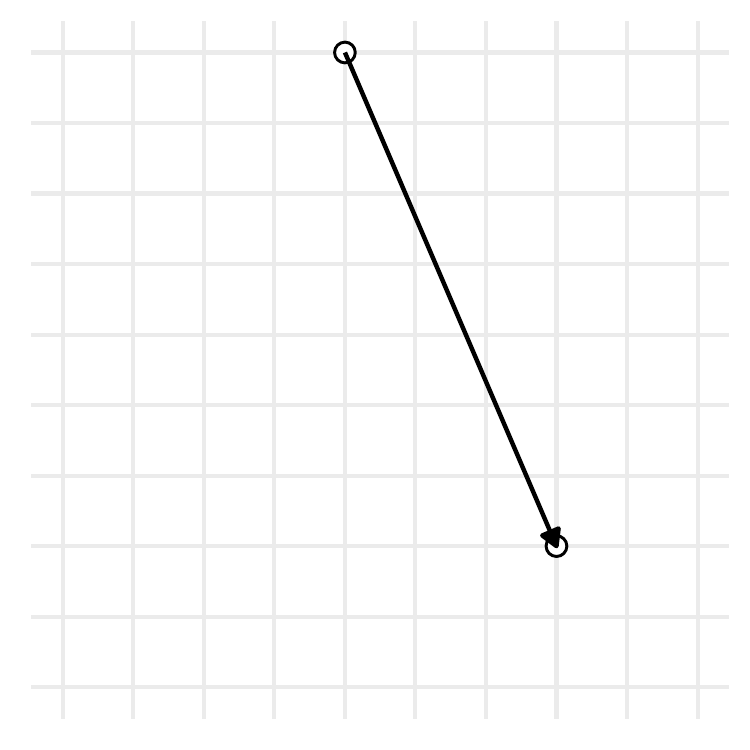}
          \caption{$N = 1$}
          \label{fig:heterogeneity-n1}
        \end{center}
     \end{subfigure}
     ~
     \begin{subfigure}[b]{0.25\textwidth}
        \begin{center}  \includegraphics[width=\textwidth]{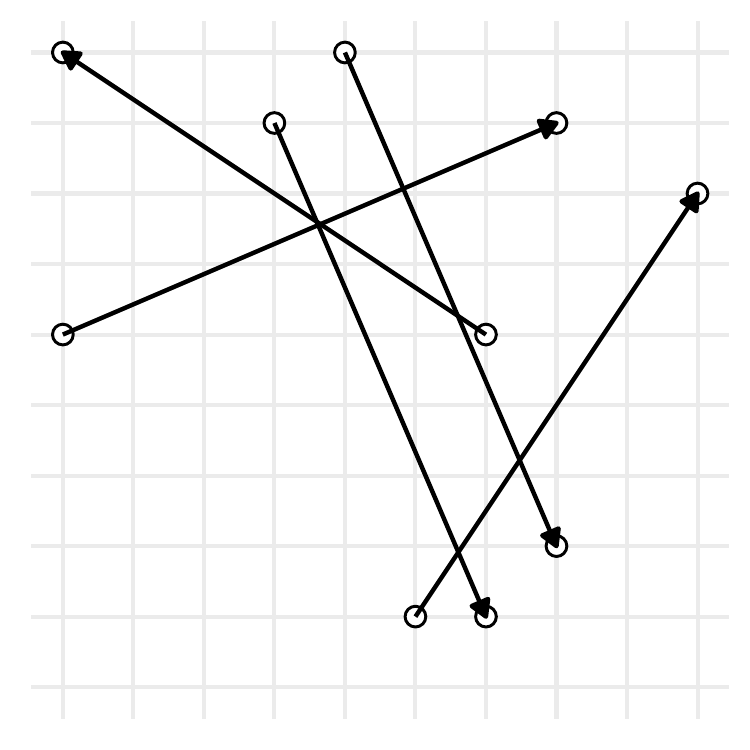}
          \caption{$N = 5$}
          \label{fig:heterogeneity-n5}
        \end{center}
     \end{subfigure}
     ~
     \begin{subfigure}[b]{0.25\textwidth}
        \begin{center}  \includegraphics[width=\textwidth]{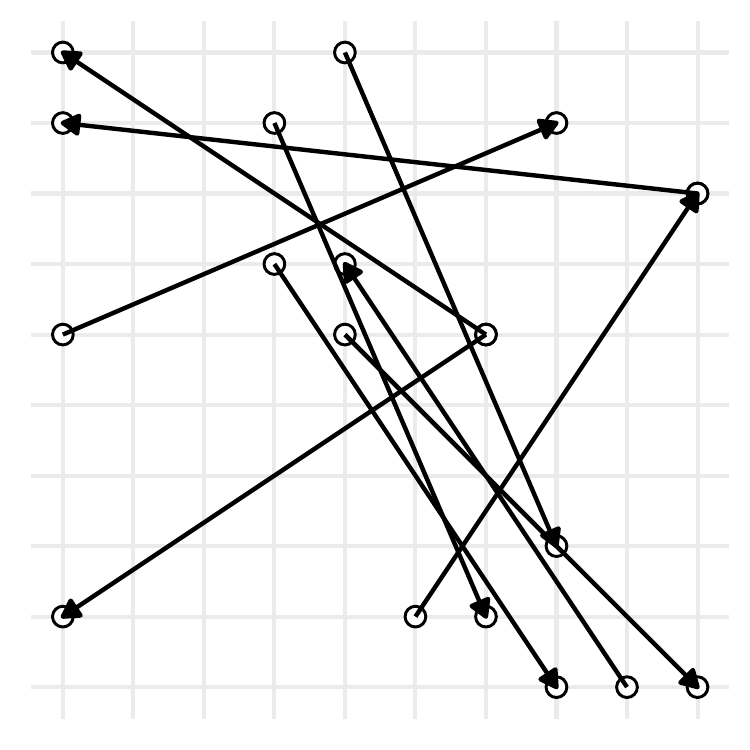}
          \caption{$N = 10$}
          \label{fig:heterogeneity-n10}
        \end{center}
     \end{subfigure}
    \caption{Examples of the distributions of the random rider types on the grid network ($N=1,5,10$; $L=100$).}
    \label{fig:heterogeneity_od_diagram}
\end{figure}

\begin{figure}[htbp]
\begin{center}  \includegraphics[width=0.5\textwidth]
{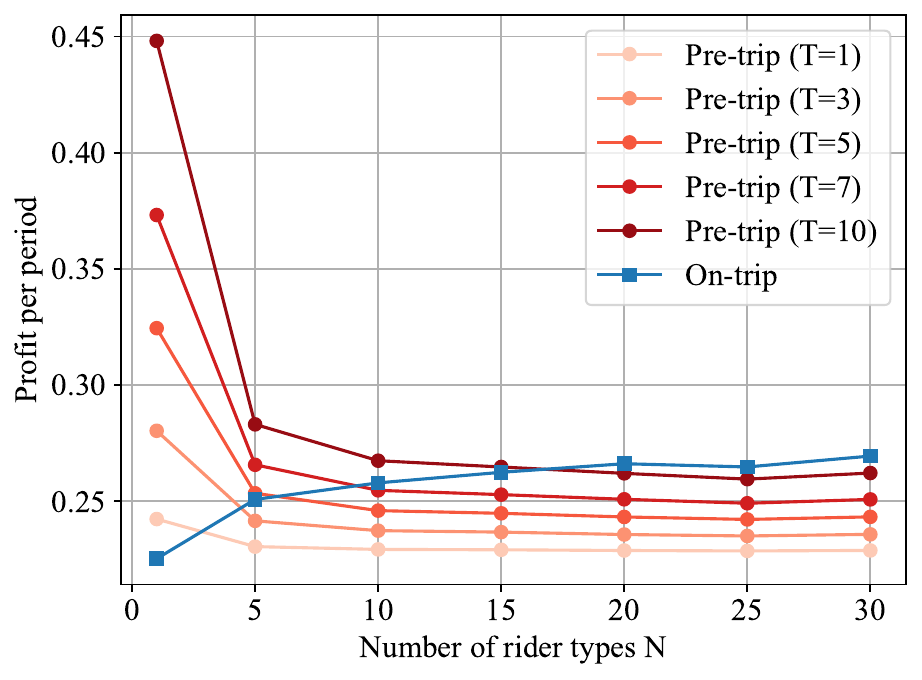}
  \caption{Profit comparison between on-trip and pre-trip matching policies under different numbers of rider types on the grid network. %
  } 
  \label{fig:heterogeneity}
\end{center}
\end{figure}

\Cref{fig:heterogeneity} presents the pre-trip (shades of red) and on-trip matching (blue) profits for varying $N$. Pre-trip matching profit decreases markedly with $N$, reflecting the earlier observation that the ideal scenario for pre-trip matching is a single origin-destination: self-matches are common when demand is highly concentrated ($N$ is low), but become rarer as demand disperses ($N$ increases). Meanwhile, on-trip matching profit increases with $N$. As demand spreads out, more on-trip matching opportunities are created, which offsets the effect of lower matching value between riders. 

The favorable performance of on-trip matching when riders are spatially dispersed proves to be important in real-world settings, which, despite showing high demand in dense corridors between major points of interest, also shows substantial demand spread across greater metropolitan areas. In the next section, we apply a policy that combines pre-trip and on-trip matching (and pricing) to a real-world setting in Chicago.

\subsection{Chicago Case Study} \label{ss:chicago}

\subsubsection{Setup}
The data set provided by Chicago Data Portal\footnote{%
Source: \url{https://tinyurl.com/4dcn9d5d}, accessed 2023-06-30.} contains the pickup and dropoff time and location of each trip, along with an indicator of whether the rider is willing to share the ride (``authorized"). We focus on authorized trips (about 15\% of all trips) recorded over eight weeks in October and November of 2019 before COVID-19. The first seven weeks serve as the training set, and the final week serves as the test set.

\begin{figure}[htbp]
     \centering
     \begin{subfigure}[b]{0.32\textwidth}
        \begin{center}  \includegraphics[height=6cm]{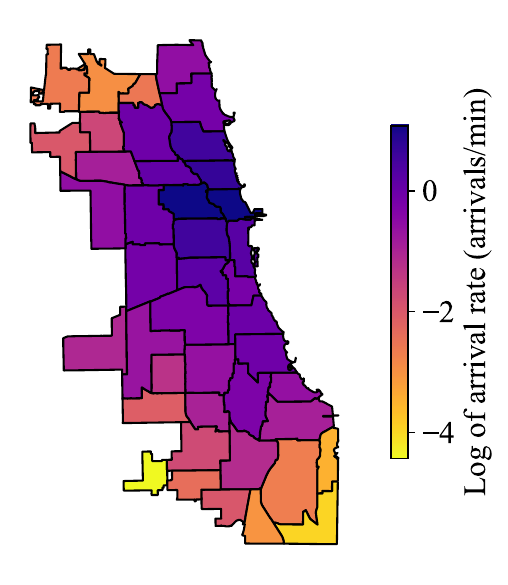}
          \caption{Log of Arrival Rates}
          \label{fig:arrival_rate}
        \end{center}
     \end{subfigure}
     \begin{subfigure}[b]{0.32\textwidth}
        \begin{center}  \includegraphics[height=6cm]{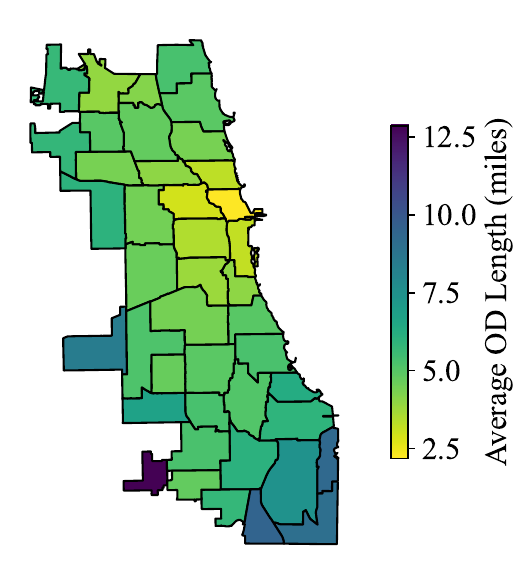}
          \caption{Trip Lengths}
          \label{fig:length}
        \end{center}
     \end{subfigure}
     \begin{subfigure}[b]{0.32\textwidth}
        \begin{center}  \includegraphics[height=6cm]{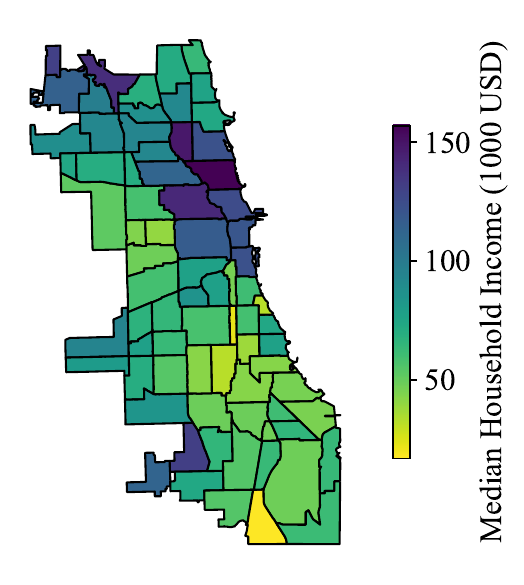}
          \caption{Household Incomes}
          \label{fig:income}
        \end{center}
     \end{subfigure}
    \caption{Geographical distribution of arrival rates, trip lengths, and household incomes in Chicago neighborhoods. }
    \label{fig:arrival_rate_and_length}
\end{figure}

\paragraph{Rider Demand.} Since the data set does not include riders who arrive without requesting a ride, we use trip data as a proxy of arrival data. We focus on the one-hour time window of 7:30 -- 8:30 a.m. on Monday for each week. The total arrival rate in this time window is 28.25 riders per minute in the training set, and 29.43 riders per minute in the test set. 

To create a set of request types, we follow the two-step clustering method of \cite{aouad2022dynamic}. First, we cluster the 76 community areas of the City of Chicago\footnote{Source: \url{https://tinyurl.com/4yxcvfaj}, accessed 2023-06-30.} (excluding the O'Hare International Airport) into 42 zones. Then, we cluster the riders originating from each pickup zone by their dropoff locations using $k$-means clustering. We obtain in total 259 rider types. Riders in the test set are mapped to the closest of the 259 types from the training set. 

The geographic distributions of arrival rate, trip length, and household income are shown in \Cref{fig:arrival_rate_and_length}.\footnote{\Cref{fig:arrival_rate,fig:length} use the clustered zones described in the previous paragraph. \Cref{fig:income} uses the original community areas in the data from  \url{https://metop.io/insights/agqp}.} The downtown zones (center-right) have the highest demand, the shortest average trip lengths, and the highest household incomes; the zones in the outskirts have the lowest demand, the longest average trip lengths, and the lowest household incomes. Moreover, the zones in the outskirts also tend to have the lowest levels of transit accessibility \citep{ermagun2020equity}. %

\paragraph{Road Network.} Recall that in the discrete-time setting described in \Cref{sec:model}, time periods are short enough that there is at most one arrival per period, with the probability of an arrival $\sum_{i \in [N]} \Lambda_i$ $\ll 1$. Accordingly, we set the period length to be $\delta = \nicefrac{0.1}{\sum_{i \in [N]} \Lambda_i} = \nicefrac{0.1}{28.25 \text{ riders} \cdot \text{min}^{-1}} = \nicefrac{0.1}{0.47 \text{ riders} \cdot \text{s}^{-1}} = 0.21 \text{ s}$. Recall further that vehicles travel one unit of distance per period. Then, at a speed of $24\text{ mph} \approx 10.73\ \text{m/s}$ (\citealt{chicago_speed}), a vehicle travels a distance of $(10.73\ \text{m/s})(0.21 \text{ s}) \approx 2.28\ \text{m}$. We therefore discretize the Chicago road network (obtained using the \texttt{OSMnx} package, \cite{boeing2025modeling}) into segments of length 2.28 m, resulting in a road network with 4,254,632 nodes and 8,606,506 edges. %

\paragraph{Payments and Costs.} We assume riders'  willingness-to-pay is uniformly distributed between $0$ and $1$ in dollars per mile, so that $\lambda(p) = 1-p$. 
We consider costs $c \in\{0.7, 0.9, 1.1\}$ in dollars per mile.\footnote{This is representative of Uber driver per-distance rate in regions in the US, see \url{https://rideshareguru.com/uber-driver-pay-rates-by-city-per-mile-and-per-minute/}, accessed 2023-06-30.} 

\paragraph{Policies.} We focus on the combined matching policy, and use pre-trip matching as a benchmark. We consider waiting window lengths of $\{0.0, 0.5, 1.0, 1.5, 2.0\}$ minutes, which corresponds to $T\in\{0, 141, 282, 423, 564 \}$ periods. All policies are trained on the training data and evaluated in simulation on the test data over 100 repetitions.

\subsubsection{Results}

\paragraph{Overall Performance.} The performance metrics of the combined and pre-trip matching policies in the test data and training data are presented in \Cref{tab:test_results} and \Cref{tab:training_results} (Appendix \ref{sec:training_result}), respectively. Performance metrics include the profit rate per minute, the average quoted price (per rider, per unit distance), the average payment (per converted request, per unit distance), the throughput (requests per minute), the match rate (number of matched requests divided by total requests), the on-trip match portion (the fraction of matches made when one of the requests had originally been dispatched solo), the cost efficiency (one minus total dispatching cost divided by a hypothetical cost if all requests were dispatched solo),\footnote{Cost efficiency can be interpreted as the percentage improvement in cost for shared rides relative to solo rides. Under two-party matching, cost efficiency ranges from zero (pure solo rides) to 0.5 (ideal shared rides, where every rider is dispatched in a shared ride with another rider sharing the exact origin and destination).}  and the average detour rate (over matched requests, and defined as the total detour distance divided by a hypothetical total distance of the matched requests assuming all were dispatched solo).

\begin{table}[htbp]
  \centering
  \scalebox{0.62}
  {
    \begin{tabular}{clrrrrrrrrrrrrrrr}
    \toprule
    \multicolumn{2}{c}{\textbf{Cost} (\$/mile)} & \multicolumn{5}{c}{$c=0.7$} & \multicolumn{5}{c}{$c=0.9$} & \multicolumn{5}{c}{$c=1.1$} \\
    \cmidrule(lr){3-7} \cmidrule(lr){8-12} \cmidrule(lr){13-17}
    \multicolumn{2}{c}{\textbf{Waiting Window} (min)} & \multicolumn{1}{c}{$0$} & \multicolumn{1}{c}{$0.5$} & \multicolumn{1}{c}{$1$} & \multicolumn{1}{c}{$1.5$} & \multicolumn{1}{c}{$2$} & \multicolumn{1}{c}{$0$} & \multicolumn{1}{c}{$0.5$} & \multicolumn{1}{c}{$1$} & \multicolumn{1}{c}{$1.5$} & \multicolumn{1}{c}{$2$} & \multicolumn{1}{c}{$0$} & \multicolumn{1}{c}{$0.5$} & \multicolumn{1}{c}{$1$} & \multicolumn{1}{c}{$1.5$} & \multicolumn{1}{c}{$2$} \\
    \cmidrule(r){1-17}
    \multirow{3}[0]{*}{\shortstack{\textbf{Profit}\\(\$/min)}} & Pre-trip & 2.734 & 4.079 & 4.928 & 5.496 & 5.872 & 0.302 & 0.762 & 1.297 & 1.750 & 2.079 & 0.000 & 0.000 & 0.000 & 0.000 & 0.015 \\
          & Combined & 4.750 & 5.301 & 5.755 & 6.162 & 6.481 & 1.343 & 1.690 & 2.020 & 2.330 & 2.576 & 0.000 & 0.042 & 0.163 & 0.251 & 0.404 \\
          & Difference & 73.8\% & 30.0\% & 16.8\% & 12.1\% & 10.4\% & 344.1\% & 121.8\% & 55.7\% & 33.1\% & 23.9\% & - & $\infty$ & $\infty$ & $\infty$ & 2555.7\% \\
    \cmidrule(r){1-17}
    \multirow{3}[0]{*}{\shortstack{\textbf{Quoted Price}\\(\$/mile)}} & Pre-trip & 0.850 & 0.798 & 0.777 & 0.765 & 0.756 & 0.950 & 0.902 & 0.873 & 0.857 & 0.846 & 1.000 & 1.000 & 1.000 & 1.000 & 0.988 \\
          & Combined & 0.800 & 0.781 & 0.767 & 0.758 & 0.750 & 0.892 & 0.873 & 0.856 & 0.845 & 0.836 & 1.000 & 0.984 & 0.962 & 0.949 & 0.934 \\
          & Difference & -5.9\% & -2.1\% & -1.2\% & -0.9\% & -0.8\% & -6.1\% & -3.3\% & -1.9\% & -1.4\% & -1.2\% & 0.0\% & -1.6\% & -3.8\% & -5.1\% & -5.4\% \\
    \cmidrule(r){1-17}
    \multirow{3}[0]{*}{\shortstack{\textbf{Payment}\\(\$/mile)}} & Pre-trip & 0.850 & 0.791 & 0.767 & 0.754 & 0.744 & 0.950 & 0.887 & 0.850 & 0.830 & 0.812 & - & - & - & - & 0.773 \\
          & Combined & 0.792 & 0.772 & 0.757 & 0.747 & 0.739 & 0.871 & 0.850 & 0.830 & 0.815 & 0.805 & - & 0.880 & 0.876 & 0.850 & 0.833 \\
          & Difference & -6.9\% & -2.3\% & -1.2\% & -0.9\% & -0.7\% & -8.3\% & -4.2\% & -1.8\% & -1.1\% & -0.9\% & - & - & - & - & 7.7\% \\
    \cmidrule(r){1-17}
    \multirow{3}[0]{*}{\shortstack{\textbf{Throughput}\\(requests/min)}} & Pre-trip & 4.387 & 5.920 & 6.534 & 6.893 & 7.157 & 1.459 & 2.848 & 3.700 & 4.198 & 4.522 & 0.000 & 0.000 & 0.000 & 0.000 & 0.358 \\
          & Combined & 5.867 & 6.423 & 6.811 & 7.095 & 7.329 & 3.152 & 3.735 & 4.213 & 4.557 & 4.817 & 0.000 & 0.481 & 1.101 & 1.500 & 1.920 \\
          & Difference & 33.7\% & 8.5\% & 4.2\% & 2.9\% & 2.4\% & 116.1\% & 31.1\% & 13.9\% & 8.6\% & 6.5\% & - & $\infty$ & $\infty$ & $\infty$ & 436.9\% \\
    \cmidrule(r){1-17}
    \multirow{2}[0]{*}{\shortstack{\textbf{Match Rate}}} & Pre-trip & 0.000 & 0.392 & 0.527 & 0.591 & 0.624 & 0.000 & 0.343 & 0.506 & 0.588 & 0.637 & - & - & - & - & 0.644 \\
          & Combined & 0.566 & 0.647 & 0.700 & 0.735 & 0.760 & 0.570 & 0.653 & 0.710 & 0.746 & 0.771 & - & 0.741 & 0.778 & 0.829 & 0.860 \\
    \cmidrule(r){1-17}
    \shortstack{\textbf{On-trip Match Portion}} & Combined & 1.000 & 0.660 & 0.448 & 0.317 & 0.237 & 1.000 & 0.719 & 0.483 & 0.343 & 0.255 & - & 0.810 & 0.598 & 0.414 & 0.287 \\
    \cmidrule(r){1-17}
    \multirow{2}[0]{*}{\shortstack{\textbf{Cost Efficiency}}} & Pre-trip & 0.000 & 0.105 & 0.160 & 0.193 & 0.214 & 0.000 & 0.082 & 0.147 & 0.187 & 0.212 & - & - & - & - & 0.322 \\
          & Combined & 0.149 & 0.181 & 0.206 & 0.228 & 0.245 & 0.148 & 0.177 & 0.203 & 0.225 & 0.241 & - & 0.223 & 0.233 & 0.253 & 0.276 \\
    \cmidrule(r){1-17}
    \multirow{2}[0]{*}{\shortstack{\textbf{Average Detour Rate}\\(matched requests)}} & Pre-trip & - & 0.097 & 0.084 & 0.075 & 0.066 & - & 0.103 & 0.085 & 0.073 & 0.068 & - & - & - & - & 0.000 \\
          & Combined & 0.100 & 0.096 & 0.091 & 0.085 & 0.080 & 0.099 & 0.095 & 0.091 & 0.084 & 0.079 & - & 0.055 & 0.072 & 0.076 & 0.073 \\
    \bottomrule
    \end{tabular}
  }
  \caption{Test results on Chicago data.}
  \label{tab:test_results}
\end{table}

The results show that combined matching improves profit substantially across all instances compared with pre-trip matching, especially when the waiting window $T$ is relatively small and the cost $c$ is relatively high. Under these settings, as reflected in the on-trip match portion, the pre-trip matching phase is increasingly suppressed by low waiting windows and throughput (the latter exacerbating the effect of the former), making the addition of on-trip matching more important.

Note that when $T=0$, pre-trip matching reduces to pure solo rides (i.e., all requests are dispatched solo immediately upon arrival, and remain solo), and combined matching reduces to pure on-trip matching (i.e., all requests are either matched to pre-existing on-trip riders or dispatched solo immediately upon arrival, and riders dispatched solo can continue to be matched while on-trip). In this setting and under pre-trip matching, all riders are quoted the price $p=(1+c)/2$ (in dollars per mile), which follows from solving the simple profit maximization problem $\max_{p \in [0, 1]} \lambda(p) (p-c)$, with $\lambda(p)=1-p$ under our willingness-to-pay model.

Therefore, when $c \geq 1$, all service under pre-trip matching is completely shut down, as the cost can never be recouped with only solo rides. Although combined matching (or pure on-trip matching) is also unprofitable when $T = 0$ and $c = 1.1$, there exist longer waiting windows (0.5 to 1.5 minutes) for which combined matching is profitable while pre-trip matching remains unprofitable. In fact, as reflected in the on-trip match portion $(<1)$ in these settings,  adding on-trip matching can also add some pre-trip matching, despite pre-trip matching being unprofitable alone. This is because on-trip matching serves as a backup option after initial solo dispatch. Therefore, the platform is willing to accept more requests, which brings in opportunities to match requests pre-trip.

The profit gains under combined matching are largely driven by improvements in match rate and cost efficiency, and despite the substantial increase in match rates, the detour rates remain relatively modest.
There is only one exception to the improvements in cost efficiency, which arises when $c = 1.1$ and and $T = 2$ minutes. In this setting, pure pre-trip matching shows higher cost efficiency, despite a lower match rate. This is because the pricing policy under pre-trip matching essentially shuts down all but a small number of closely-located downtown riders that can effectively self-match.

The improvements in cost efficiency allow the platform to lower the prices quoted to riders and the resulting payments, thereby increasing throughput and improving overall accessibility. The lower prices reflect the importance of making matching and pricing decisions \emph{jointly}, in contrast with previous literature which studies these components separately. Improving matching (via the addition of on-trip matching) then promotes a virtuous cycle: the platform can afford to lower prices, which then increases throughput and makes it even easier for the platform to find higher-quality matches. The combination of higher profit and lower prices make the combined pre-trip and on-trip matching policy a win-win for platform and riders alike.

These wins can also be contextualized in terms of waiting time. To this end, \Cref{fig:profit_comparison} compares the profits of combined and pre-trip matching under different waiting periods, focusing on cost $c = 0.7$. Blue means that the profit of combined matching is higher, and red means that the profit of pre-trip matching is higher.
Naturally, combined matching improves when its own waiting window increases relative to that for pre-trip matching. But more importantly, the paler cells on the chart show the waiting windows required for pre-trip and combined matching to attain the same profit. Overall, combined matching can decrease the waiting window by roughly 0.5 to 1 minute and still attain a comparable profit as pre-trip matching. For example, from the profit numbers in \Cref{tab:test_results}, pre-trip matching needs a 2-minute waiting window to attain a profit of \$5.872/min, while combined matching needs only a 1-minute window to attain a comparable profit of \$5.755 per minute (a gap of only 2.0\%). This  reduced waiting window highlights another benefit of combined matching for riders: rather than absorbing the gains in profit, the platform can redistribute these gains back to riders in the form of substantially shorter pre-trip waiting windows.

\begin{figure}[htbp]
\begin{center}  \includegraphics[width=0.6\textwidth]
{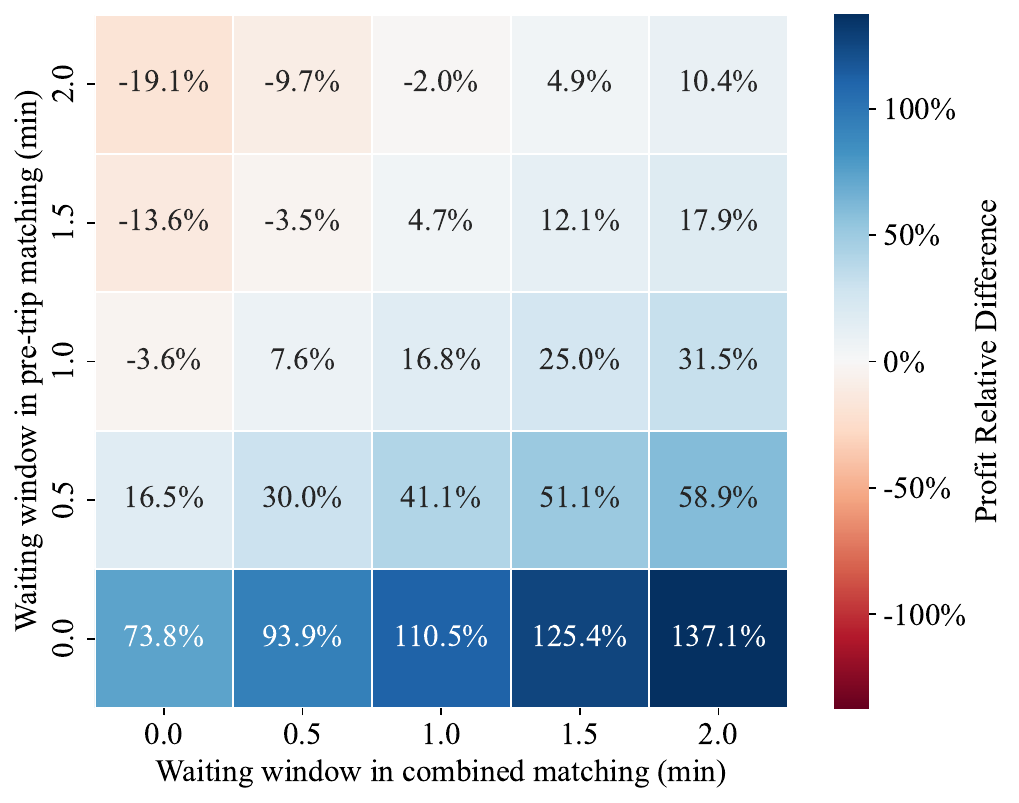}
  \caption{Profit comparison between combined and pre-trip matching policies under different waiting periods ($c=0.7$). %
  } 
  \label{fig:profit_comparison}
\end{center}
\end{figure}

\paragraph{How and Where to Match.} To understand how the combined policy achieves its performance, we investigate where pre-trip and on-trip matching are favored within the combined policy. To this end, we split the 42 zones of Chicago  into two regions (\Cref{fig:dense_regions}): the dense region (dark gray) consists of 10 zones with the highest demand density (0.23 riders per minute per square mile), while the sparse region (light gray) consists of the remaining 32 zones (0.04 riders per minute per square mile).

Following the heterogeneity factor $\Delta$ introduced in Example 1 (\Cref{ss:synthetic}), we define $\Delta$ for a match as one minus the ratio of overlap distance to total shared ride distance. As before, a smaller $\Delta$ corresponds to a match that is closer to the ideal match between two riders of identical origin-destinations.
\Cref{fig:matching_distribution} compares the number of matched requests per minute between dense and sparse regions, and further breaks down the matches by pre-trip (red) versus on-trip (blue), and by low (solid), medium (hatched), and high  (crosshatched)  heterogeneity levels.\footnote{We define a match with $\Delta<1/3$ as low heterogeneity, $1/3\le\Delta<2/3$ as medium heterogeneity, and $\Delta\ge2/3$ as high heterogeneity.} %
The results show a clear contrast in matching tendency between dense and sparse regions. In the dense region where demand is concentrated, most matches are low-heterogeneity pre-trip matches. 
In contrast, in the sparse region where demand is spatially dispersed, medium-to-high-heterogeneity {on-trip}  matches become more common. 
This trend corroborates both synthetic examples (\Cref{ss:synthetic}), in which on-trip matching becomes favorable as demand spatially disperses with a moderate level of heterogeneity---now with real-world data.

\begin{figure}[htbp]
     \centering
     \begin{subfigure}[b]{0.25\textwidth}
        \begin{center}  \includegraphics[height=6cm]{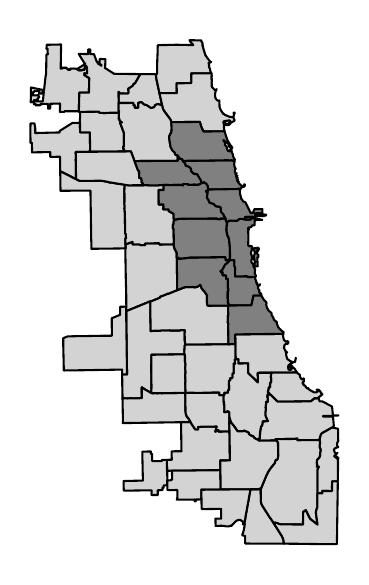}
          \caption{Region split}
          \label{fig:dense_regions}
        \end{center}
     \end{subfigure}
     \begin{subfigure}[b]{0.6\textwidth}
        \begin{center}  \includegraphics[height=6cm]{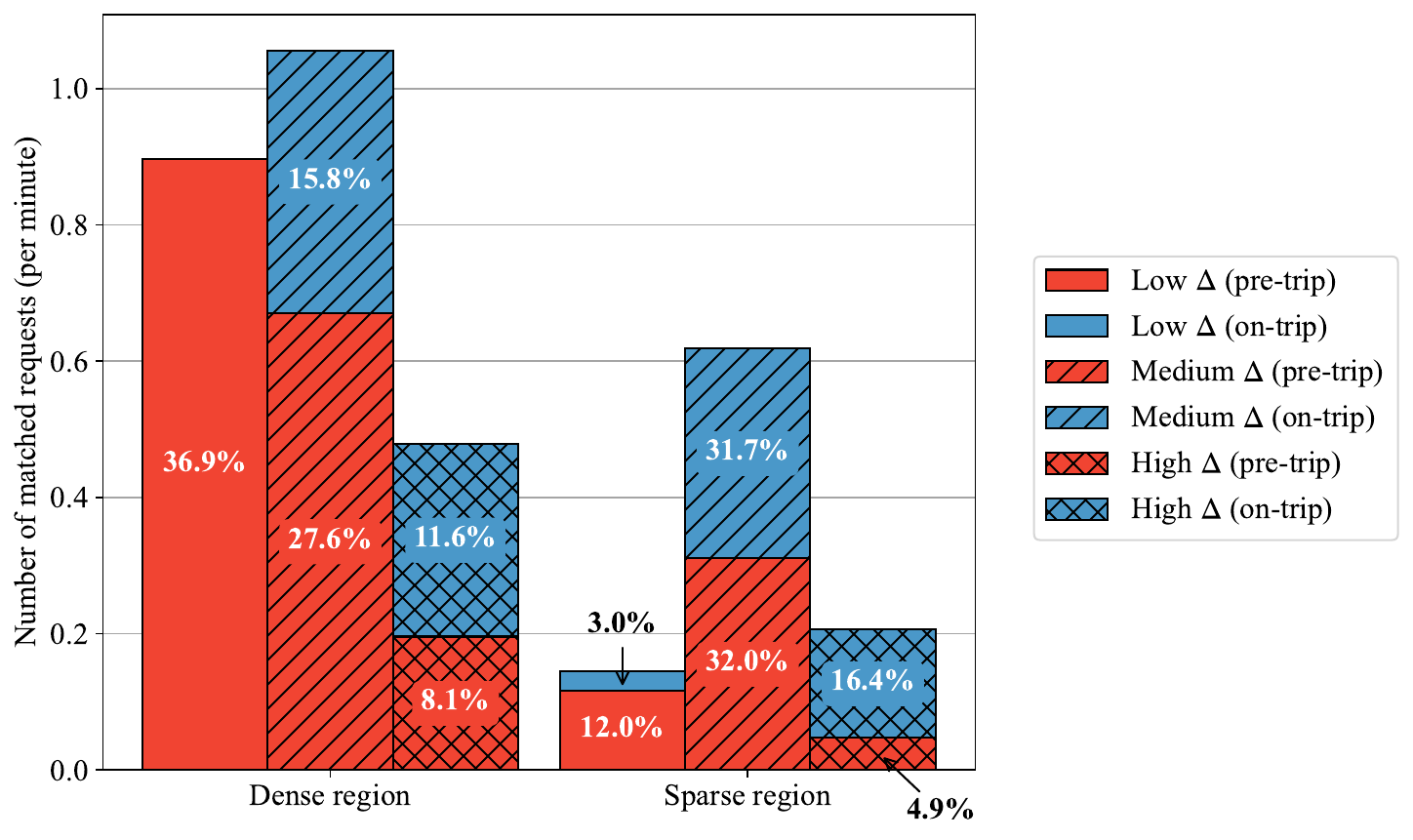}
          \caption{Matching outcome distribution}
          \label{fig:matching_distribution}
        \end{center}
     \end{subfigure}
    \caption{Region split in Chicago and matching outcome distribution of combined matching when $c=0.9$, waiting period is $1.5$ min.}
    \label{fig:chicago_heterogeneity}
\end{figure}

\paragraph{Equity Implications.} To further break down the impact of each matching policy across the different zones of Chicago, we visualize the geographical distribution of average quoted price, cost efficiency, and match rate on the test set under the two matching policies in \Cref{fig:maps}, focusing on $c=0.7$ and a waiting window of 1 minute. Each row in \Cref{fig:maps} contains three subfigures for a particular metric: the distributions under the two matching policies, and the comparison (relative difference for quoted price, and absolute difference for match rate and cost efficiency). 

\begin{figure}[htbp]
     \centering
     \begin{subfigure}[b]{0.32\textwidth}
        \begin{center}  \includegraphics[height=6cm]{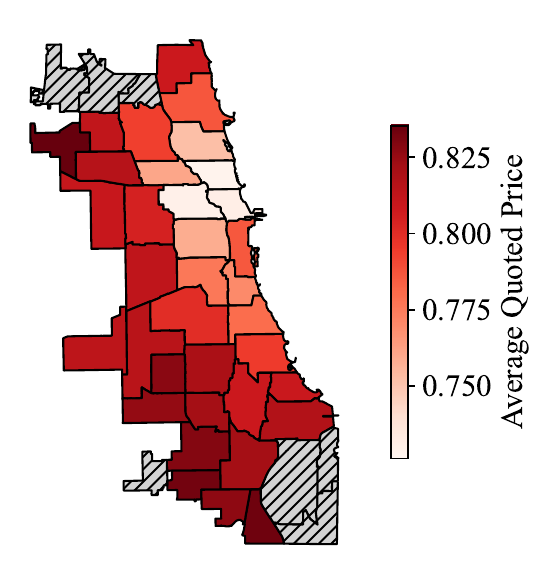}
          \caption{Quoted Price (Pre-trip)}
          \label{fig:Chicago_price_pretrip}
        \end{center}
     \end{subfigure}
     \hfill
     \begin{subfigure}[b]{0.32\textwidth}
        \begin{center}  \includegraphics[height=6cm]{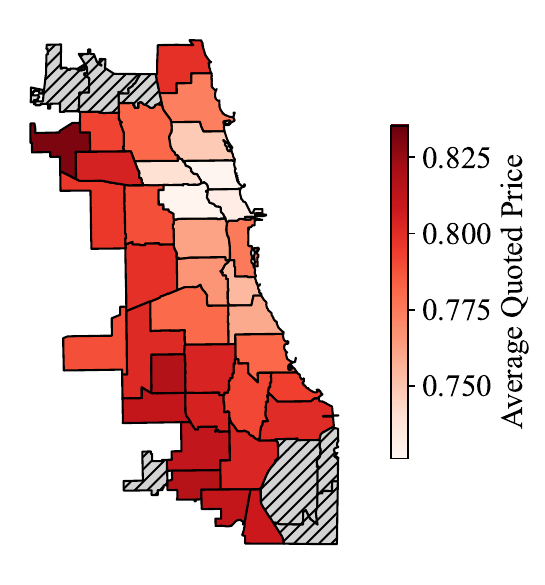}
          \caption{Quoted Price (Combined)}
          \label{fig:Chicago_price_combined}
        \end{center}
     \end{subfigure}
     \hfill
     \begin{subfigure}[b]{0.32\textwidth}
        \begin{center}  \includegraphics[height=6cm]{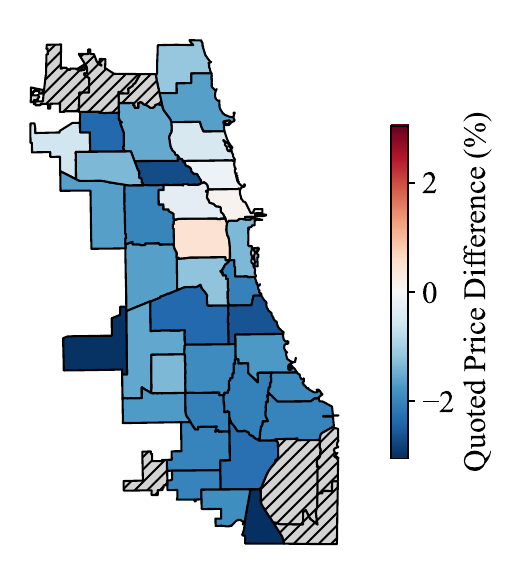}
          \caption{Quoted Price Change}
          \label{fig:Chicago_price_change}
        \end{center}
     \end{subfigure}
     \begin{subfigure}[b]{0.32\textwidth}
        \begin{center}  \includegraphics[height=6cm]{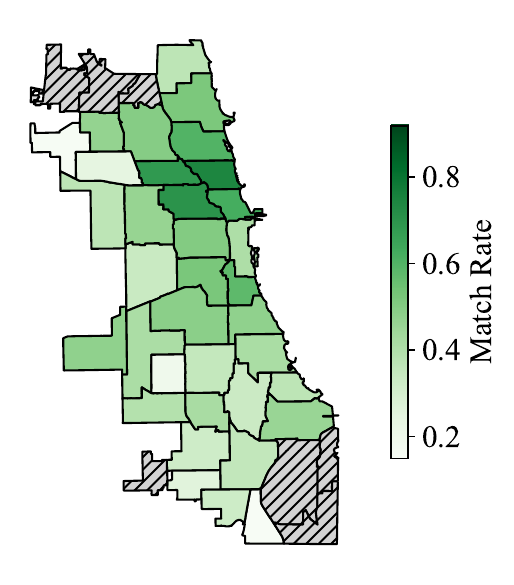}
          \caption{Match Rate (Pre-trip)}
          \label{fig:Chicago_mr_pretrip}
        \end{center}
     \end{subfigure}
     \hfill
     \begin{subfigure}[b]{0.32\textwidth}
        \begin{center}  \includegraphics[height=6cm]{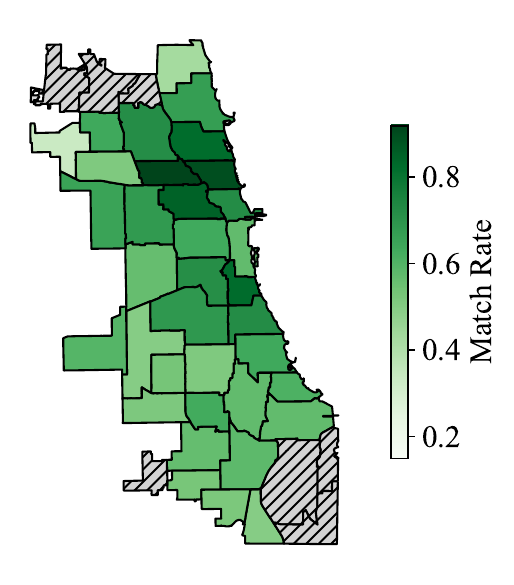}
          \caption{Match Rate (Combined)}
          \label{fig:Chicago_mr_combined}
        \end{center}
     \end{subfigure}
     \hfill
     \begin{subfigure}[b]{0.32\textwidth}
        \begin{center}  \includegraphics[height=6cm]{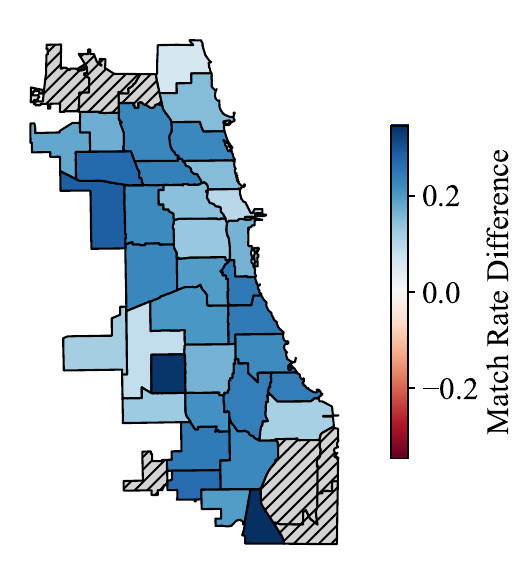}
          \caption{Match Rate Change}
          \label{fig:Chicago_mr_change}
        \end{center}
     \end{subfigure}
     \begin{subfigure}[b]{0.32\textwidth}
        \begin{center}  \includegraphics[height=6cm]{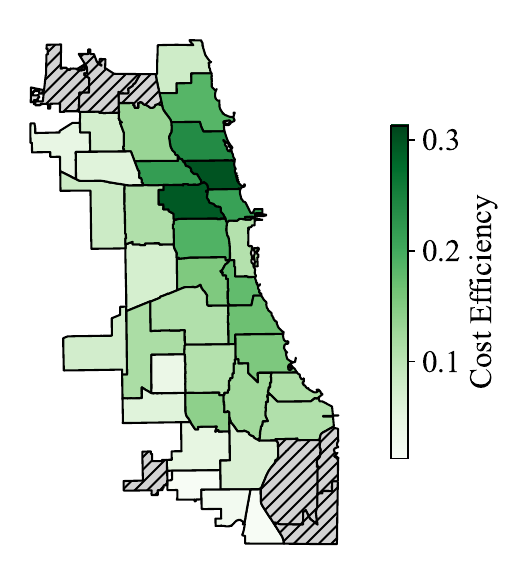}
          \caption{Cost Efficiency (Pre-trip)}
          \label{fig:Chicago_ce_pretrip}
        \end{center}
     \end{subfigure}
     \hfill
     \begin{subfigure}[b]{0.32\textwidth}
        \begin{center}  \includegraphics[height=6cm]{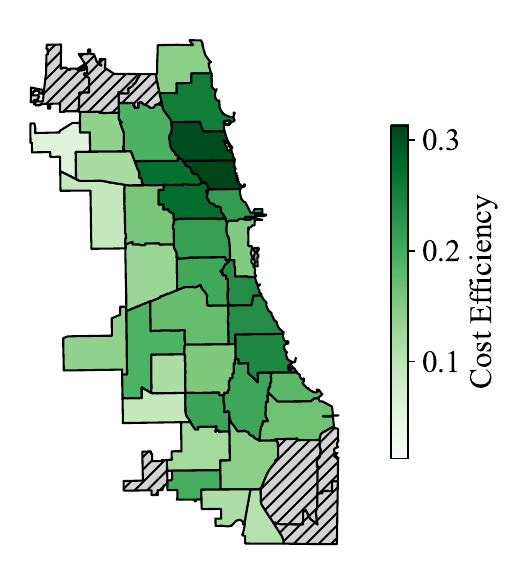}
          \caption{Cost Efficiency (Combined)}
          \label{fig:Chicago_ce_combined}
        \end{center}
     \end{subfigure}
     \hfill
     \begin{subfigure}[b]{0.32\textwidth}
        \begin{center}  \includegraphics[height=6cm]{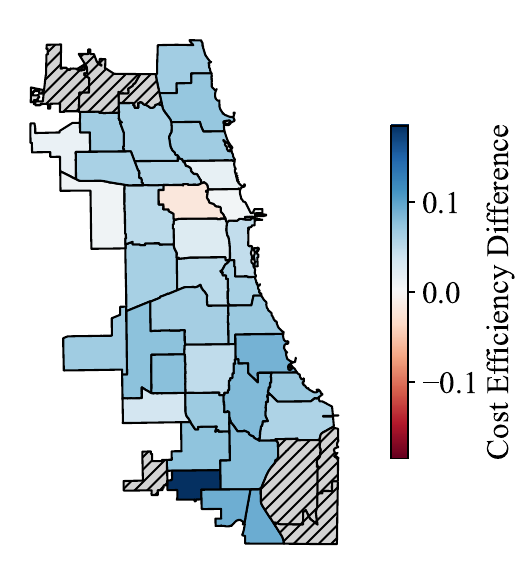}
          \caption{Cost Efficiency Change}
          \label{fig:Chicago_ce_change}
        \end{center}
     \end{subfigure}
    \caption{Geographical distribution of performance metrics in Chicago ($c=0.7$, waiting window of 1 minute). The shadowed zones are those with fewer than five rider arrivals originating from them in the test data.}
    \label{fig:maps}
\end{figure}

Under pre-trip matching, the outskirts have higher quoted prices (\Cref{fig:Chicago_price_pretrip}), lower cost efficiency (\Cref{fig:Chicago_ce_pretrip}), and lower match rate (\Cref{fig:Chicago_mr_pretrip}) because of the lower demand density: in fact, match rates are low and cost efficiency is close to zero, showing that the platform effectively offers solo rides service in these zones. Moreover, the high prices deter access to shared rides. 

Accordingly, the outskirts are where the combined matching policy makes the greatest impact: quoted prices are lower (\Cref{fig:Chicago_price_change}), driven by higher match rate (\Cref{fig:Chicago_mr_change}) and cost efficiency (\Cref{fig:Chicago_ce_change}).
The improvement in match rate and cost efficiency comes from the new matching opportunities created by on-trip matching. Whereas outskirts riders have difficulty getting matched pre-trip because of the sparsity of the demand, they can pick up later arriving riders from  denser zones along the route. Because these riders are also associated with longer trips (\Cref{fig:length}), the matching opportunity can be substantial. More effective matching drives a decrease in quoted prices, expanding shared rides access to zones where such service has historically been the most challenging to operate. The effect of expanded shared rides access in these sparse zones is substantial: despite their low density per square mile, they comprise 42\% of the overall demand. Moreover, these zones generally correspond to the lowest median household incomes (\Cref{fig:income}) and levels of transit accessibility \citep{ermagun2020equity}.

We note that although quoted prices decrease overall, downtown zones can see slightly higher quoted prices (\Cref{fig:Chicago_price_change}).
This effect can arise from competition between downtown-downtown matches %
and outskirts-downtown matches 
under intermediate waiting windows. In such settings, the waiting window is long enough for downtown riders, but too short for outskirts riders, to match effectively pre-trip. Under pre-trip matching, the platform can rely on downtown-downtown matches, and therefore prefers to concentrate more downtown riders by lowering the downtown price. In contrast, combined matching enables more outskirts-downtown matches: an outskirts rider who initially is dispatched solo can likely pick up a downtown rider along the way, as the downtown is dense and the outskirts riders have long trips (\Cref{fig:arrival_rate_and_length}). Hence, competition is created between downtown-downtown and outskirts-downtown matches, and the combined policy pushes downtown prices slightly higher, suppressing pre-trip matches and shifting downtown riders toward on-trip matches with outskirts riders. For more detail, see Appendix \ref{ss:match_types}.

Overall, we find that the addition of on-trip matching enables higher profits and efficiency for the platform, allowing it to either lower overall prices for riders or redistribute the gains in the form of lower waiting times in the pre-trip phase. The greatest benefits are seen in the sparse outskirts zones, expanding access to shared rides to areas that have historically been underserved.

\section{Conclusion and Discussion}
\label{sec:conclusion}

Pre-trip matching decides matches before riders are dispatched; hence, a rider who is dispatched solo remains solo for the duration of her trip. By contrast, on-trip matching allows the platform to make matches even after a rider is initially dispatched solo. In the on-trip setting, a key feature is that the rider must be tracked over the course of her entire trip, as her matching opportunity and value change with her position.
In this work, we develop joint pre-trip and on-trip matching policies and investigate the conditions that favor each type of matching. We then embed our matching model into an outer pricing optimization problem to understand the interaction of pricing and matching decisions.

Through computational results on both synthetic and real-world data, we find that pre-trip matching is favorable in dense downtown areas, which approach the ideal single-origin-destination setting. Meanwhile, on-trip matching is favorable in sparse outskirts where riders are spatially dispersed, which enables the platform to manage a tradeoff between matching opportunity and matching value. Adding on-trip matching increases profit and efficiency, allowing the platform to lower prices for riders and expand access to shared rides, especially in the sparse outskirts.  Thus, successful operation of shared rides in large metropolitan areas, where a substantial portion of the demand can be spatially dispersed, requires both  pre-trip \emph{and} on-trip matching, emphasizing the importance of modeling this new matching paradigm.

\medskip

\bibliographystyle{informs2014trsc} %
\bibliography{manuscript_references} %

\newpage

\begin{APPENDICES}
\section{Modeling Combined Pre-Trip and On-Trip Matching}
\label{sec:model_combined}

In this section, we extend our on-trip matching model to a \emph{combined matching model} that permits both pre-trip and on-trip matching. When a new rider converts and is not immediately matched with another rider, the platform first initiates a waiting window of at most $T \in \mathbbm{Z}_{\geq 0}$ time periods at their origin, during which it attempts to perform \emph{pre-trip matching}. The value of $T$ is the maximum allowable time before the rider must begin their trip. If $T$ time periods have elapsed and the rider is still solo, the platform dispatches a vehicle for a solo trip. Any successful matching that occurs after a rider has departed their origin is referred to as \emph{on-trip matching}.

Consistent with the pure on-trip matching model, we represent the state of a solo rider using the tuple $(i, u)$, where $i \in [N]$ denotes the rider type and integer $u$ tracks the temporal state. The range of $u$ is extended to $-T+1 \leq u \leq \ell_i-1$. When $u \le 0$, the rider is still at the origin with $|u|$ waiting periods remaining. When $u > 0$, the rider has already departed the origin, and $u$ denotes the number of time periods elapsed since departure. The set $\mathcal{C}$ is therefore defined as $\{(i,u)\}_i\in[N], -T+1 \leq u \leq \ell_i-1, u\in\mathbbm{Z}$. 

Since the platform cannot match two on-trip riders (because they are already on separate vehicles), any valid matching must involve at least one rider who is either new or in the pre-trip phase. We can therefore still let $\ell_{i,j}^u$ denote the shared trip length, when a new or pre-trip rider of type $i \in [N]$ is matched with a solo rider $(j, u) \in \mathcal{C}$. Because a solo rider $(j,u)$ with $u \le 0$ remains at their origin $O_j$, the shared trip length is invariant during the pre-trip phase; thus, $\ell_{i,j}^u = \ell_{i,j}^{v}$ for all $u, v \leq 0$. Let $\mathcal{N}_{{j},u}^+$ be the set of rider types compatible with a solo rider $({j},u) \in \mathcal{C}$. Similarly to the pure on-trip matching case, we set two conditions for a solo rider $(j,u)$ to be compatible with a new or pre-trip rider of type $i$: (i)
$
    \ell_{i,j}^u <  \ell_{i}+\ell_j-\max\{0,u\}
$; and (ii) $(j,u)$ is not on any shortest path from $O_i$ (exclusive) to $D_j$. One can easily verify that a pre-trip rider $(u \leq 0)$ is always compatible with a new or pre-trip rider of the same type, that is, $i\in \mathcal{N}_{i,u}^+$ holds for all $i\in[N]$ and $u \leq 0$.

Finally, note that pure pre-trip matching and pure on-trip matching can be seen as special cases of the combined matching model. To disable pre-trip matching, the platform can let $T=0$, such that the rider departs immediately and no pre-trip matching is possible. To disable on-trip matching, the platform can set $\mathcal{N}_{i,u}^+ = \emptyset$ for all $u > 0$.

\subsection{Fluid Model}
\label{ss:model_fluid_combined}

Under the combined model, the optimal pricing and matching problem is formulated as an average-reward, discrete-time MDP. Assuming a match always occurs between a new 
rider and an existing solo rider, the MDP and fluid relaxation extend directly from Sections \ref{ss:model_mdp} and \ref{ss:model_fluid}. Specifically, in the fluid relaxation, $C(\boldsymbol{\lambda})$ is now given by
\begin{subequations}
\begin{align}
    C(\boldsymbol{\lambda}) = \quad \min_{\boldsymbol{x}, \boldsymbol{y}} & \quad \sum_{i\in[N]} \sum_{u=0}^{\ell_i-1} c y_{i}^u + \sum_{i\in[N]} \sum_{\substack{(j,u)\in\mathcal{C}:\:\\i\in \mathcal{N}_{j,u}^+}} c \ell_{i,j}^u x_{i,j}^u \notag\\
    \textrm{s.t.}&\quad \sum_{\substack{
    (j,u)\in\mathcal{C}:\:\\ i\in\mathcal{N}^+_{j,u}
    }}  x_{i,j}^u + y_i^{-T} = \Lambda_i\lambda_i, & \forall i\in[N],\label{eq:fluid_demand_combined}\\
    & \quad \sum_{i\in\mathcal{N}_{j,u}^+}  x_{i,j}^u + y_j^u = y_j^{u-1}, & \forall (j,u)\in\mathcal{C}, \label{eq:fluid_flow_balance_combined}\\
    & \quad \Lambda_i\lambda_i  y_j^u \ge \left( 1-\sum_{k\in\mathcal{N}^+_{j,u}} \Lambda_k\lambda_k \right) x_{i,j}^u, & \forall (j,u)\in\mathcal{C}, i\in\mathcal{N}^+_{j,u},\notag\\
    & \quad x_{i,j}^u \ge 0, & \forall (j,u)\in\mathcal{C}, i\in\mathcal{N}^+_{j,u}, \notag\\
    & \quad y_i^u \ge 0, & \forall i\in[N], -T \leq u \leq \ell_i-1, u\in\mathbbm{Z}, \notag
\end{align}
\end{subequations}
where we now use $y_i^{-T}$ to denote the average rate of new riders of type $i$ that are not immediately matched and become solo riders, for all $i\in[N]$. The conclusion of Proposition \ref{prop:fluid_bound} still holds in this context: the optimal objective value of the fluid relaxation provides an upper bound on the optimal long-run average profit of the MDP model.

\subsection{Pricing and Matching}
\label{ss:pricing_matching_combined}

To determine the pricing policy $\hat{\boldsymbol{\lambda}}$ for the combined matching model, we continue to use the MM algorithm outlined in Section \ref{ss:pricing_matching}. 

To implement the matching policy, we extend the dual-based policy from Section \ref{ss:pricing_matching} to a  generalized setting that accommodates matching beyond just new riders.

Specifically, let $\gamma_i(\hat{\boldsymbol{\lambda}})$ and $\xi_i^u(\hat{\boldsymbol{\lambda}})$ denote the optimal dual variables associated with constraints \eqref{eq:fluid_demand_combined} and \eqref{eq:fluid_flow_balance_combined}, respectively. At time $t$, let $\mathcal{C}_t$ represent the set of all riders currently in the system (including both waiting and on-trip riders). Any newly arrived rider of type $i$ at time $t$ is initialized with state $(i, -T)$ and also included in $\mathcal{C}_t$, and for notational convenience, let $\xi_i^{-T}(\hat{\boldsymbol{\lambda}})=\gamma_i(\hat{\boldsymbol{\lambda}})$.

We determine the matching decisions at time $t$ by solving the following non-bipartite matching problem:
\begin{subequations} \label{eq:non_bipartite}
\begin{align}
    \min_{\boldsymbol{z}} \quad & \sum_{\substack{(i,u), (j,v)\in\mathcal{C}_t:\\u<v}} z_{i,j}^{u,v}\left( c \ell_{i,j}^v - \xi_i^u(\hat{\boldsymbol{\lambda}}) - \xi_j^u(\hat{\boldsymbol{\lambda}})\right) \label{eq:non_bipartite_obj}\\
    \textrm{s.t.} \quad  & \sum_{\substack{(j,v)\in\mathcal{C}_t:\\u<v}} z_{i,j}^{u,v} + \sum_{\substack{(j,v)\in\mathcal{C}_t:\\u>v}} z_{j,i}^{v,u} \leq 1, & \forall (i,u)\in\mathcal{C}_t,\label{eq:non_bipartite_1}\\
    & z_{i,j}^{u,v} = 0, & \forall (i,u), (j,v)\in\mathcal{C}_t:u<v<0,\label{eq:non_bipartite_2}\\
    & z_{i,j}^{u,v} = 0, & \forall (i,u), (j,v)\in\mathcal{C}_t:0<u<v,\label{eq:non_bipartite_3}\\
    & z_{i,j}^{u,v} \in \{0,1\}, & \forall (i,u), (j,v)\in\mathcal{C}_t:u<v.\label{eq:non_bipartite_4}
\end{align}
\end{subequations}
In \eqref{eq:non_bipartite}, the binary decision variable $z_{i,j}^{u,v}$ equals 1 if rider $(i,u)$ is matched with rider $(j,v)$ at time $t$ (where $u < v$ avoids double counting), and 0 otherwise. The objective function \eqref{eq:non_bipartite_obj} minimizes the total generalized cost of the matching. Since we never match two on-trip riders together (enforced by constraint \eqref{eq:non_bipartite_3}), the shared trip length for a match between $(i,u)$ and $(j,v)$ is given by $\ell_{i,j}^v$, and thus the associated generalized cost is $c\ell_{i,j}^v - \xi_i^u(\hat{\boldsymbol{\lambda}}) - \xi_j^v(\hat{\boldsymbol{\lambda}})$.

Constraint \eqref{eq:non_bipartite_1} ensures that each rider is matched to at most one other rider. Constraint \eqref{eq:non_bipartite_2} restricts pre-trip matching to cases where at least one rider is ready to depart (i.e., $v=0$). This constraint is based on the logic that if both riders still have waiting time remaining ($u<v<0$), it is always better to defer the match to later time periods to preserve flexibility. Finally, Constraint \eqref{eq:non_bipartite_3} prohibits matching two riders who have both already departed the origin ($0<u<v$), because they are already on separate vehicles.

\section{Proofs}

\subsection{Proof of Proposition \ref{prop:fluid_bound}}
Denote $\boldsymbol{\lambda}^*$ and $\boldsymbol{\phi}^*$ as the optimal pricing and matching policies under the MDP. For all $(j,u)\in\mathcal{C}$ and $i\in\mathcal{N}_{j,u}^+$, define the counting process $\{M_t^*(i, (j,u))\}_{t\ge0}$ as the accumulated number of matches between $i$ and $(j,u)$, up to time $t$ under the optimal policies $\boldsymbol{\lambda}^*$ and $\boldsymbol{\phi}^*$. Define the counting process $\{A_t^*(i,u)\}_{t\ge0}$ as the accumulated number of events that solo riders $(i,u)\in\mathcal{C}$ continue solo and move to the next state. For notational convenience, we also define $\{A_t^*(i)\}_{t\ge0}$ as the accumlated number of the requests of type-$i\in[N]$ riders, and define $\{A_t^*(i,0)\}_{t\ge0}$ as the accumulated number of events that new type-$i\in[N]$ riders do not get matched immediately and become solo riders, under the optimal policies $\boldsymbol{\lambda}^*$ and $\boldsymbol{\phi}^*$.

Then we define the steady flow rates under $\boldsymbol{\lambda}^*$ and $\boldsymbol{\phi}^*$ as 
\begin{align*}
    &\hat{x}_{i,j}^{u} := \lim_{t\to\infty} 1/t\cdot \mathbb{E}[M_t^*(i,(j,u))], & \forall (j,u)\in\mathcal{C}, i\in\mathcal{N}_{j,u}^+,\\
    &\hat{y}_{i}^{u} := \lim_{t\to\infty} 1/t\cdot \mathbb{E}[A_t^*(i,u)], & \forall (i,u)\in\mathcal{C},\\
    &\hat{y}_{i}^{0} := \lim_{t\to\infty} 1/t\cdot \mathbb{E}[A_t^*(i,0)], & \forall i \in [N].
\end{align*}

We first prove that $\hat{x}_{i,j}^{u}$, $\hat{y}_{i}^{u}$ and $\hat{y}_{i}^{0}$ gives a feasible solution to \textrm{(\textsf{CB})} when $\boldsymbol{\lambda}=\boldsymbol{\lambda}^*$. According to the definitions, the following equations hold:
\begin{align}
  &\sum_{\substack{
    (j,u)\in\mathcal{C}:\:\\ i\in\mathcal{N}^+_{j,u}
    }} M_t^*(i,(j,u)) + A_t^*(i,0) = A_t^*(i),& \forall i\in[N],t\in\mathbbm{N}, \label{eq:counting_demand}\\
    & \sum_{i\in\mathcal{N}_{j,u}^+}  M_t^*(i,(j,u)) + A_t^*(j,u) = A_{t-1}^*(j,u-1), & \forall (j,u)\in\mathcal{C}, t\in\mathbbm{N}^+,\label{eq:counting_flow_balance}
\end{align}
where \eqref{eq:counting_demand} is because the new requests are either immediately matched with existing solo riders, or become solo riders; and \eqref{eq:counting_flow_balance} is because all riders in state $(j,u)$ come from state $(j,u-1)$, and are either matched with new riders or continuing solo. Taking the long-run time-average expected value for \eqref{eq:counting_demand} and \eqref{eq:counting_flow_balance}, we get
\begin{align}
    &\quad \sum_{\substack{
    (j,u)\in\mathcal{C}:\:\\ i\in\mathcal{N}^+_{j,u}
    }}  \hat{x}_{i,j}^u + \hat{y}_i^{0} = \lim_{t\to\infty} \frac{1}{t} \mathbb{E}[A_t^*(i)] = \Lambda_i\lambda_i^*, & \forall i\in[N], \label{eq:fluid_solution_demand}\\
    & \quad \sum_{i\in\mathcal{N}_{j,u}^+}  \hat{x}_{i,j}^u + \hat{y}_j^u = \hat{y}_j^{u-1}, & \forall (j,u)\in\mathcal{C},\label{eq:fluid_solution_flow_balance}
\end{align}
thus \eqref{eq:fluid_demand} and \eqref{eq:fluid_flow_balance} are satisfied.

To show that \eqref{eq:fluid_ratio} holds, for each $(j,u)\in\mathcal{C}$, $i\in\mathcal{N}_{j,u}^+$, and $t\in\mathbb{N}$, the expected increment of the number of matches between successive time steps $t$ and $t+1$ is:
\begin{align}
    & \mathbb{E}[M_{t+1}^*(i,(j,u)) - M_t^*(i,(j,u))] \notag\\
    =& \sum_{\boldsymbol{\mathrm{s}}\in\mathcal{S}} \textrm{Pr}(\boldsymbol{\mathrm{s}}_{t+1} = \boldsymbol{\mathrm{s}}) \cdot \mathbb{E} [M_{t+1}^*(i,(j,u)) - M_t^*(i,(j,u))|\boldsymbol{\mathrm{s}}_{t+1} = {\mathbf{s}} ] \notag\\
    \stackrel{(a)}{=}& \sum_{\boldsymbol{\mathrm{s}}\in\mathcal{S}} \textrm{Pr}(\boldsymbol{\mathrm{s}}_{t+1} = \boldsymbol{\mathrm{s}}) \cdot \textrm{Pr} [M_{t+1}^*(i,(j,u)) - M_t^*(i,(j,u))=1|\boldsymbol{\mathrm{s}}_{t+1} = {\mathbf{s}} ] \notag\\
    \stackrel{(b)}{\leq}& \sum_{\boldsymbol{\mathrm{s}}\in\mathcal{S}:\mathrm{s}_{j,u}=1} \textrm{Pr}(\boldsymbol{\mathrm{s}}_{t+1} = \boldsymbol{\mathrm{s}}) \cdot \textrm{Pr}  [A_{t+1}^*(i) - A_t^*(i)=1|\boldsymbol{\mathrm{s}}_{t+1} = {\mathbf{s}} ] \notag\\
    =& \sum_{\boldsymbol{\mathrm{s}}\in\mathcal{S}:\mathrm{s}_{j,u}=1} \textrm{Pr}(\boldsymbol{\mathrm{s}}_{t+1} = \boldsymbol{\mathrm{s}}) \cdot \Lambda_i\lambda_i^*, \label{eq:match_ub}
\end{align}
where (a) is because $M_{t+1}^*(i,(j,u)) - M_t^*(i,(j,u)) \in \{0, 1\}$, and (b) is because $M_{t+1}^*(i,(j,u)) - M_t^*(i,(j,u)) = 1$ means that there must be (i) an existing solo rider $(j,u)$ at time $t+1$ and (ii) a new type-$i$ rider converts at time $t+1$.

Meanwhile, for every $(j,u)\in\mathcal{C}$ and $t\in\mathbbm{N}$, the expected increment of counts of riders continuing solo and moving to the next state is:
\begin{align}
    & \mathbb{E}[A_{t+1}^*(j,u) - A_t^*(j,u)] \notag\\
    =& \sum_{\boldsymbol{\mathrm{s}}\in\mathcal{S}} \textrm{Pr}(\boldsymbol{\mathrm{s}}_{t+1} = \boldsymbol{\mathrm{s}}) \cdot \mathbb{E} [A_{t+1}^*(j,u) - A_t^*(j,u)|\boldsymbol{\mathrm{s}}_{t+1} = {\mathbf{s}} ] \notag\\
    =& \sum_{\boldsymbol{\mathrm{s}}\in\mathcal{S}} \textrm{Pr}(\boldsymbol{\mathrm{s}}_{t+1} = \boldsymbol{\mathrm{s}}) \cdot \textrm{Pr} [A_{t+1}^*(j,u) - A_t^*(j,u)=1|\boldsymbol{\mathrm{s}}_{t+1} = {\mathbf{s}} ] \notag\\
    =& \sum_{\boldsymbol{\mathrm{s}}\in\mathcal{S}:\mathrm{s}_{j,u}=1} \textrm{Pr}(\boldsymbol{\mathrm{s}}_{t+1} = \boldsymbol{\mathrm{s}}) \cdot \textrm{Pr} \left[\sum_{k\in\mathcal{N}_{j,u}^+}M_{t+1}^*(k,(j,u)) - \sum_{k\in\mathcal{N}_{j,u}^+}M_{t}^*(k,(j,u)) =0\middle|\boldsymbol{\mathrm{s}}_{t+1} = {\mathbf{s}} \right] \notag\\
    \stackrel{(a)}{\geq}&\sum_{\boldsymbol{\mathrm{s}}\in\mathcal{S}:\mathrm{s}_{j,u}=1} \textrm{Pr}(\boldsymbol{\mathrm{s}}_{t+1} = \boldsymbol{\mathrm{s}}) \cdot \mathbb{E} \left[1-\sum_{k\in\mathcal{N}_{j,u}^+}\Lambda_k\lambda_k^* \middle|\boldsymbol{\mathrm{s}}_{t+1} = {\mathbf{s}} \right] \notag\\
    =& \sum_{\boldsymbol{\mathrm{s}}\in\mathcal{S}:\mathrm{s}_{j,u}=1} \textrm{Pr}(\boldsymbol{\mathrm{s}}_{t+1} = \boldsymbol{\mathrm{s}}) \cdot \left( 1-\sum_{k\in\mathcal{N}_{j,u}^+}\Lambda_k\lambda_k^*\right), \label{eq:unmatch_ub}
\end{align}
where (a) is because when there are no compatible riders ${k}\in\mathcal{N}_{j,u}^+$ {who} convert at time $t+1$, the existing solo rider $(j,u)$ {cannot} get matched and thus must continue solo.

Combining \eqref{eq:match_ub} and \eqref{eq:unmatch_ub}, we get
\begin{align*}
    \mathbb{E}[A_{t+1}^*(j,u) - A_t^*(j,u)] \ge \frac{1-\sum_{k\in\mathcal{N}_{j,u}^+}\Lambda_k\lambda_k^*}{\Lambda_i \lambda_i^*} \mathbb{E}[M_{t+1}^*(i,(j,u)) - M_t^*(i,(j,u))],
\end{align*}
which holds for all $(j,u)\in\mathcal{C}, i\in\mathcal{N}_{j,u}^+$ and $t\in\mathbbm{N}$.

Summing over $t\in\mathbbm{N}$, and taking the long-run time-average value, we get \eqref{eq:fluid_ratio}. Therefore, $\hat{x}_{i,j}^{u}$, $\hat{y}_{i}^{u}$ and $\hat{y}_{i}^{0}$ gives a feasible solution to \textrm{(\textsf{CB})} when $\boldsymbol{\lambda}=\boldsymbol{\lambda}^*$.

\smallskip

In the second part of the proof, we show that the fluid objective value under prices $\boldsymbol{\lambda}^*$ and fluid variables $\hat{x}_{i,j}^{u}$, $\hat{y}_{i}^{u}$ and $\hat{y}_{i}^{0}$ is equivalent to the optimal long-run average profit of the MDP model. Under $\boldsymbol{\lambda}^*$ and $\boldsymbol{\phi}^*$, the total profit up to time $t$ is given as follows
\begin{align*}
    \Pi_{t}^{\boldsymbol{\lambda}^*, \boldsymbol{\phi}^*} = \sum_{n\in[A_{t}^{\boldsymbol{\lambda}^*, \boldsymbol{\phi}^*}]} \left[ p_{i_n}(\lambda_{i_n}^*) - c\ell_{i_n} \right] + \sum_{n\in[M_{t}^{\boldsymbol{\lambda}^*, \boldsymbol{\phi}^*}]} \left[ p_{i_n}(\lambda_{i_n}^*) + p_{j_n}(\lambda_{j_n}^*) -c u_n -c \ell_{i_n,j_n}^{u_n} \right].
\end{align*}

Since $A_{t}^{\boldsymbol{\lambda}^*, \boldsymbol{\phi}^*}$ is the total number of riders who end up solo and $M_{t}^{\boldsymbol{\lambda}^*, \boldsymbol{\phi}^*}$ is the total number of matches, we have 
\begin{align*}
    A_{t}^{\boldsymbol{\lambda}^*, \boldsymbol{\phi}^*} &= \sum_{i\in[N]} A_{t-1}^*(i, \ell_i-1),\\
    M_{t}^{\boldsymbol{\lambda}^*, \boldsymbol{\phi}^*} &= \sum_{i\in[N]} \sum_{\substack{(j,u)\in\mathcal{C}:\:\\i\in \mathcal{N}_{j,u}^+}} M_t^*(i, (j,u)),
\end{align*}
thus we can reformulate $\Pi_{t}^{\boldsymbol{\lambda}^*, \boldsymbol{\phi}^*}$ into
\begin{align*}
    \Pi_{t}^{\boldsymbol{\lambda}^*, \boldsymbol{\phi}^*} =& \sum_{i\in[N]} A_{t-1}^*(i, \ell_i-1) \left[ p_i(\lambda_i^*) - c\ell_i \right] + \sum_{i\in[N]} \sum_{\substack{(j,u)\in\mathcal{C}:\:\\i\in \mathcal{N}_{j,u}^+}} M_t^*(i, (j,u))\left[ p_i(\lambda_i^*) + p_j(\lambda_j) -c u -c \ell_{i,j}^{u} \right].
\end{align*}

Taking the long-run time-average value, we get
$$
\Pi= \lim_{T\rightarrow+\infty} \frac{1}{t} \mathbbm{E} [\Pi_{t}^{\boldsymbol{\lambda}^*, \boldsymbol{\phi}^*}] = \quad \sum_{i\in[N]} \hat{y}_{i}^{\ell_i-1}\left[ p_i(\lambda_i^*) - c\ell_i \right]  + \sum_{i\in[N]} \sum_{\substack{(j,u)\in\mathcal{C}:\:\\i\in \mathcal{N}_{j,u}^+}} \hat{x}_{i,j}^u \left[ p_i(\lambda_i^*) + p_j(\lambda_j^*) -c u -c \ell_{i,j}^{u} \right],
$$
and we further reformulate the first term by iteratively applying \eqref{eq:fluid_solution_flow_balance} as follows. (For ease of exposition, we present the derivation assuming $\ell_i \ge 3$.)
\begin{align}
    & \sum_{i\in[N]} \hat{y}_{i}^{\ell_i-1}\left[ p_i(\lambda_i^*) - c\ell_i \right] \notag \\
    =& \sum_{i\in[N]} \hat{y}_{i}^{\ell_i-1}\left[ p_i(\lambda_i^*) - c(\ell_i-1) \right] - c \sum_{i\in[N]} \hat{y}_{i}^{\ell_i-1}\notag  \\
    =& \sum_{i\in[N]} \left(\hat{y}_{i}^{\ell_i-2} - \sum_{j\in\mathcal{N}_{i,\ell_i-1}^+} \hat{x}_{j,i}^{\ell_i-1}\right)\left[ p_i(\lambda_i^*) - c(\ell_i-1) \right] - c \sum_{i\in[N]} \hat{y}_{i}^{\ell_i-1} \tag{apply \eqref{eq:fluid_solution_flow_balance} for $u=\ell_i-1$} \notag \\
    =& \sum_{i\in[N]} \hat{y}_{i}^{\ell_i-2} \left[ p_i(\lambda_i^*) - c(\ell_i-1) \right] - \sum_{i\in[N]} \sum_{j\in\mathcal{N}_{i,\ell_i-1}^+} \hat{x}_{j,i}^{\ell_i-1}\left[ p_i(\lambda_i^*) - c(\ell_i-1) \right] - c \sum_{i\in[N]} \hat{y}_{i}^{\ell_i-1} \notag \\
    =& \sum_{i\in[N]} \hat{y}_{i}^{\ell_i-3} \left[ p_i(\lambda_i^*) - c(\ell_i-2) \right] - \sum_{i\in[N]} \sum_{u = \ell_i -2}^{\ell_i-1}\sum_{j\in\mathcal{N}_{i,u}^+} \hat{x}_{j,i}^{u}\left[ p_i(\lambda_i^*) - cu \right] - c \sum_{i\in[N]} \sum_{u = \ell_i -2}^{\ell_i-1} \hat{y}_{i}^u \tag{apply \eqref{eq:fluid_solution_flow_balance} for $u=\ell_i-2$} \notag  \\
    =& \cdots \notag  \\
    =& \sum_{i\in[N]} \hat{y}_{i}^{0} p_i(\lambda_i^*) - \sum_{i\in[N]} \sum_{u = 1}^{\ell_i-1}\sum_{j\in\mathcal{N}_{i,u}^+} \hat{x}_{j,i}^{u}\left[ p_i(\lambda_i^*) - cu \right] - c \sum_{i\in[N]} \sum_{u =0}^{\ell_i-1} \hat{y}_{i}^u \notag  \\
    =& \sum_{i\in[N]} \hat{y}_{i}^{0} p_i(\lambda_i^*) - \sum_{i\in[N]} \sum_{\substack{(j,u)\in\mathcal{C}:\:\\i\in \mathcal{N}_{j,u}^+}} \hat{x}_{i,j}^{u}\left[ p_j(\lambda_j^*) - cu \right] - c \sum_{i\in[N]} \sum_{u =0}^{\ell_i-1} \hat{y}_{i}^u. \label{eq:proof_prop_1}
\end{align}

Hence, by substituting $\sum_{i\in[N]} \hat{y}_{i}^{\ell_i-1}\left[ p_i(\lambda_i^*) - c\ell_i \right]$ using \eqref{eq:proof_prop_1}, $\Pi$ can be expressed as
\begin{align*}
    \Pi =& \sum_{i\in[N]} \hat{y}_{i}^{0} p_i(\lambda_i^*) - \sum_{i\in[N]} \sum_{\substack{(j,u)\in\mathcal{C}:\:\\i\in \mathcal{N}_{j,u}^+}} \hat{x}_{i,j}^{u}\left[ p_j(\lambda_j^*) - cu \right] - c \sum_{i\in[N]} \sum_{u =0}^{\ell_i-1} \hat{y}_{i}^u \\ &+ \sum_{i\in[N]} \sum_{\substack{(j,u)\in\mathcal{C}:\:\\i\in \mathcal{N}_{j,u}^+}} \hat{x}_{i,j}^u \left[ p_i(\lambda_i^*) + p_j(\lambda_j^*) -c u -c \ell_{i,j}^{u} \right] \\
    =& \sum_{i\in[N]} \hat{y}_{i}^{0} p_i(\lambda_i^*) + \sum_{i\in[N]} \sum_{\substack{(j,u)\in\mathcal{C}:\:\\i\in \mathcal{N}_{j,u}^+}} \hat{x}_{i,j}^u p_i(\lambda_i^*) - c \sum_{i\in[N]} \sum_{u =0}^{\ell_i-1} \hat{y}_{i}^u - c \sum_{i\in[N]} \sum_{\substack{(j,u)\in\mathcal{C}:\:\\i\in \mathcal{N}_{j,u}^+}} \hat{x}_{i,j}^u \ell_{i,j}^{u} \\
    \stackrel{(a)}{=} & \sum_{i\in[N]} \Lambda_i \lambda_i^* p_i(\lambda_i^*) - c \sum_{i\in[N]} \sum_{u =0}^{\ell_i-1} \hat{y}_{i}^u - c \sum_{i\in[N]} \sum_{\substack{(j,u)\in\mathcal{C}:\:\\i\in \mathcal{N}_{j,u}^+}} \hat{x}_{i,j}^u \ell_{i,j}^{u},
\end{align*}
where (a) is due to \eqref{eq:fluid_solution_demand}. Therefore, $\Pi$ is equivalent to the fluid objective value under the feasible solution $\boldsymbol{\lambda}^*$,  $\hat{x}_{i,j}^{u}$, $\hat{y}_{i}^{u}$ and $\hat{y}_{i}^{0}$, and thus we have $g \geq \Pi$. $\hfill\square$

\subsection{Proof of Proposition \ref{prop:policy_on_trip_wo_ratio}}

The proof of Proposition \ref{prop:policy_on_trip_wo_ratio} is built on the following two lemmas.

\begin{lemma} \label{lemma:trip_length}
    $\forall i,j\in[N]$, $\ell_{i,j}^{1} + 1 = \min_{u \in [\ell_j - 1]} \{\ell_{i,j}^{u} + u \}$.
\end{lemma}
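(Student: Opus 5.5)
Since $u=1$ is in $[\ell_j-1]$, the minimum is at most $\ell_{i,j}^1+1$. The whole task is therefore the reverse inequality $\ell_{i,j}^u+u \ge \ell_{i,j}^1+1$ for every $u\in[\ell_j-1]$. The plan is a triangle-inequality argument on the underlying road network, using that the solo rider moves along a shortest path from $O_j$ to $D_j$ at unit speed.

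\textbf{Setup.} Let $P_j$ be the shortest path traversed by the solo rider, and let $v_u$ be her position after $u$ periods, so $d(O_j,v_u)=u$. By definition, $\ell_{i,j}^u$ is the length of the best route that starts at $v_u$, visits $O_i$, and then visits $D_i$ and $D_j$ in the better order:
\begin{align*}
\ell_{i,j}^u = d(v_u,O_i) + \min\{ d(O_i,D_i)+d(D_i,D_j),\ d(O_i,D_j)+d(D_j,D_i)\}.
\end{align*}
The second term does not depend on $u$; call it $R_{i,j}$. It therefore suffices to show
\begin{align*}
d(v_u,O_i)+u \ \ge\ d(v_1,O_i)+1 \quad \text{for all } u\ge 1.
\end{align*}

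\textbf{Key step.} Since $v_1$ lies on $P_j$ between $v_0=O_j$ and $v_u$, and sub-paths of shortest paths are shortest paths, we have $d(v_1,v_u)=u-1$. The triangle inequality then gives
\begin{align*}
d(v_1,O_i)\le d(v_1,v_u)+d(v_u,O_i) = (u-1)+d(v_u,O_i).
\end{align*}
Rearranging yields the displayed claim. Adding $R_{i,j}$ to both sides gives $\ell_{i,j}^1+1\le \ell_{i,j}^u+u$, and the minimum is attained at $u=1$.

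\textbf{Main obstacle.} The only delicate point is modeling. One must confirm that the shared trip length $\ell_{i,j}^u$ is defined purely as a shortest-route quantity from the current location, independent of the compatibility conditions in $\mathcal{N}_{j,u}^+$. One must also confirm that distances are symmetric, or at least that the triangle inequality is applied in the correct direction. Here the argument only uses the directed inequality $d(v_1,O_i)\le d(v_1,v_u)+d(v_u,O_i)$ together with the fact that the vehicle traveled forward from $v_1$ to $v_u$ along $P_j$, so no symmetry assumption is needed.

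If the lemma is meant to hold for all pairs $i,j$ regardless of compatibility, the argument above covers that case as well, because it never invokes the compatibility conditions.
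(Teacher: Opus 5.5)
Your proof is correct and is essentially the paper's argument. The paper also observes that driving from $(j,1)$ forward to $(j,u)$ and then following the optimal shared route is a feasible route of length $\ell_{i,j}^u+(u-1)$, so $\ell_{i,j}^1\le \ell_{i,j}^u+u-1$ (the paper phrases this as a contradiction); your decomposition $\ell_{i,j}^u=d(v_u,O_i)+R_{i,j}$ just makes that same triangle-inequality step explicit.
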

\begin{proof} {Proof of Lemma \ref{lemma:trip_length}}
    Suppose, for the sake of contradiction, that there exists an integer $2 \leq v \leq \ell_j - 1$ such that $\ell_{i,j}^{v} + v < \ell_{i,j}^{1} + 1$. Note that the total travel distance $\ell_{i,j}^{v} + (v - 1)$ represents a feasible path starting from node $(j,1)$ that first travels to $(j,v)$ and then begins the shared ride with $i$. By the definition of the shortest shared-ride distance, we must have $\ell_{i,j}^1 \leq \ell_{i,j}^v + (v-1)$. Adding 1 to both sides yields $\ell_{i,j}^1 + 1 \leq \ell_{i,j}^v + v$, which directly contradicts our assumption. $\hfill\square$
\end{proof}

\smallskip

\begin{lemma} \label{lemma:compatible}
$\forall i\in[N], (j,u)\in\mathcal{C}$, if $i\in\mathcal{N}_{j,u}^+$, then $i\in\mathcal{N}_{j,1}^+$.
\end{lemma}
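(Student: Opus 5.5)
We verify separately that the two compatibility conditions defining $\mathcal{N}_{j,u}^+$, the trip length condition and the backtracking condition, carry over from state $(j,u)$ to state $(j,1)$. Throughout we write $d(\cdot,\cdot)$ for shortest-path distance, and we regard $(j,1)$ and $(j,u)$ as points on the solo shortest path from $O_j$ to $D_j$, with $d((j,1),(j,u)) = u-1$.

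\emph{Trip length condition.} Assume $\ell_{i,j}^u < \ell_i + \ell_j - u$. The argument of Lemma~\ref{lemma:trip_length} shows that the vehicle can start at $(j,1)$, drive solo to $(j,u)$, and then run the optimal shared route. Hence
\[
\ell_{i,j}^1 \le \ell_{i,j}^u + (u-1) < \ell_i + \ell_j - u + u - 1 = \ell_i + \ell_j - 1,
\]
which is the trip length condition at $(j,1)$. This step is immediate.

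\emph{Backtracking condition.} This step is the main obstacle. We argue by contraposition. Suppose $(j,1)$ lies on some shortest path from $O_i$ to $D_j$, and $(j,1)\neq O_i$. We want to show that $(j,u)$ also lies on a shortest path from $O_i$ to $D_j$, which would violate the condition at $u$. Since $(j,1)$ is on a shortest $O_i\to D_j$ path,
\[
d(O_i,D_j) = d(O_i,(j,1)) + d((j,1),D_j).
\]
Since $(j,u)$ lies on a shortest path from $(j,1)$ to $D_j$ (the remaining solo route),
\[
d((j,1),D_j) = (u-1) + d((j,u),D_j).
\]
Combining these gives
\[
d(O_i,D_j) = d(O_i,(j,1)) + (u-1) + d((j,u),D_j) \ge d(O_i,(j,u)) + d((j,u),D_j),
\]
using the triangle inequality. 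The reverse inequality also holds by the triangle inequality, so equality holds and $(j,u)$ lies on a shortest $O_i\to D_j$ path.

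Two technicalities remain. First, we must check the exclusivity of $O_i$: could $(j,u) = O_i$? If so, then $d(O_i,(j,1)) + (u-1) = 0$, which forces $(j,1) = O_i$, contradicting our assumption. If instead $(j,1) = O_i$ itself, then $O_i$ lies on $j$'s route before $(j,u)$, and the chain above again places $(j,u)$ on a shortest $O_i\to D_j$ path. Since $u\ge 2$ in this case, $(j,u)\neq O_i$, so the backtracking condition fails at $u$. Second, we need the route of $j$ to be a fixed shortest path consistent with the definition of the state $(j,u)$; this holds by the model assumption that the solo rider travels along the shortest path.

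Combining the two parts, $i\in\mathcal{N}_{j,u}^+$ implies $i\in\mathcal{N}_{j,1}^+$.
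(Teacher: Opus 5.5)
Your proof is correct and follows the paper's argument: the trip length condition transfers via $\ell_{i,j}^1 \le \ell_{i,j}^u + (u-1)$ from Lemma~\ref{lemma:trip_length}, and the backtracking condition transfers by contraposition, using that $(j,u)$ lies on $j$'s shortest route from $(j,1)$ to $D_j$. Your distance-equality chain is a more precise version of the paper's ``optimality of sub-paths'' step, which as written only shows that $(j,u)$ lies on \emph{a} shortest $O_i\to D_j$ path; you also check the exclusion of $O_i$, which the paper skips, while your separate case $(j,1)=O_i$ is unnecessary because the backtracking condition at $(j,1)$ then holds trivially.
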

\begin{proof} {Proof of Lemma \ref{lemma:compatible}}
If $u = 1$, the lemma clearly holds. Suppose $i \in \mathcal{N}_{j,u}^+$ for some $u \geq 2$. Based on the trip length condition, we have $\ell_{i,j}^u < \ell_i + \ell_j - u$. Combining this with Lemma \ref{lemma:trip_length} (i.e., $\ell_{i,j}^1 + 1 \leq \ell_{i,j}^u + u$), we obtain:
$$ \ell_{i,j}^1 \leq \ell_{i,j}^u + u - 1 < (\ell_i + \ell_j - u) + u - 1 = \ell_i + \ell_j - 1. $$
This implies that $i$ and $(j,1)$ satisfy the trip length condition. 

Now consider the backtracking condition. If $(j,u)$ is not on any shortest path from $O_i$ (exclusive) to $D_j$, then $(j,1)$ must also not be on any such shortest path. This follows because $(j,u)$ lies on the shortest path from $(j,1)$ to $D_j$; if $(j,1)$ were on a shortest path from $O_i$ to $D_j$, then by the optimality of sub-paths, $(j,u)$ itself would also be on that shortest path, leading to a contradiction. Therefore, $i$ and $(j,1)$ satisfy the backtracking condition. 

Since $i$ and $(j,1)$ satisfy both the trip length condition and the backtracking condition, $i\in\mathcal{N}_{j,1}^+$.
$\hfill\square$
\end{proof}

\smallskip

Now we go back to the proof of Proposition \ref{prop:policy_on_trip_wo_ratio}. Note that after relaxing the ratio constraints \eqref{eq:fluid_ratio}, $C(\boldsymbol{\lambda})$ 
can be obtained via the following dual problem:
\begin{subequations}
\begin{align}
    C(\boldsymbol{\lambda}) = \quad \max_{\boldsymbol{\gamma}, \boldsymbol{\xi}} & \quad \sum_{i\in[N]} \Lambda_i\lambda_i\gamma_i   \notag \\
    \textrm{s.t.}&\quad \gamma_i - \xi_{i}^{1} \leq c, & \forall i\in[N], \label{eq:fluid_relaxation_on_trip_dual_1}\\
    & \quad \xi_{i}^{u} - \xi_{i}^{u+1} \leq c, & \forall i \in [N], 1 \leq u \leq \ell_i - 2, u\in\mathbbm{Z}, \label{eq:fluid_relaxation_on_trip_dual_2}\\
    & \quad \xi_{i}^{\ell_i - 1} \leq c, & \forall i \in [N],\label{eq:fluid_relaxation_on_trip_dual_3}\\
    & \quad \gamma_i + \xi_{j}^u \leq c \ell_{i,j}^u, & \forall (j,u)\in\mathcal{C}, i\in\mathcal{N}^+_{j,u}.\label{eq:fluid_relaxation_on_trip_dual_4}
\end{align}
\end{subequations}

Let $\gamma_i(\hat{\boldsymbol{\lambda}})$ be an optimal value of $\gamma_i$ under $\boldsymbol{\lambda}=\hat{\boldsymbol{\lambda}}$. Fixing $\gamma_i(\hat{\boldsymbol{\lambda}})$, let
$$
\xi_{i}^u(\hat{\boldsymbol{\lambda}}) = \gamma_i(\hat{\boldsymbol{\lambda}}) - cu, \quad \forall (i,u)\in\mathcal{C}.
$$

The following lemma shows that $(\gamma_i(\hat{\boldsymbol{\lambda}}),\xi_{i}^u(\hat{\boldsymbol{\lambda}}))$ gives an optimal dual solution.

\begin{lemma} \label{lemma:solution_dual}
    Assume $\gamma_i(\hat{\boldsymbol{\lambda}})$ is an optimal value of $\gamma_i$ when $\boldsymbol{\lambda}=\hat{\boldsymbol{\lambda}}$. Then, $\xi_{i}^u(\hat{\boldsymbol{\lambda}})=\gamma_i(\hat{\boldsymbol{\lambda}})-cu$ gives an optimal value of $\xi_i^u$.
\end{lemma}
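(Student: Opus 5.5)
Since the dual objective $\sum_i \Lambda_i\lambda_i\gamma_i$ depends only on $\boldsymbol{\gamma}$, it suffices to show that the pair $(\gamma_i(\hat{\boldsymbol{\lambda}}), \xi_i^u(\hat{\boldsymbol{\lambda}}))$ with $\xi_i^u(\hat{\boldsymbol{\lambda}})=\gamma_i(\hat{\boldsymbol{\lambda}})-cu$ is \emph{feasible} for the dual of the relaxed problem. It then attains the optimal objective value and is therefore optimal. Throughout, I will use that $\gamma$ comes from some optimal dual solution $(\gamma,\tilde\xi)$, which is itself feasible.

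The plan is to verify the four families of constraints in turn. Constraint \eqref{eq:fluid_relaxation_on_trip_dual_1} holds with equality: $\gamma_i-\xi_i^1=c$. Constraint \eqref{eq:fluid_relaxation_on_trip_dual_2} also holds with equality, since $\xi_i^u-\xi_i^{u+1}=c$.

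For \eqref{eq:fluid_relaxation_on_trip_dual_3}, I need $\gamma_i-c(\ell_i-1)\le c$, i.e.\ $\gamma_i\le c\ell_i$. To get this, I chain the original feasible $\tilde\xi$:
\[
\gamma_i\le c+\tilde\xi_i^1\le 2c+\tilde\xi_i^2\le\cdots\le c(\ell_i-1)+\tilde\xi_i^{\ell_i-1}\le c\ell_i.
\]

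For \eqref{eq:fluid_relaxation_on_trip_dual_4}, I need $\gamma_i+\gamma_j-cu\le c\ell_{i,j}^u$ whenever $i\in\mathcal{N}_{j,u}^+$. By Lemma \ref{lemma:compatible}, $i\in\mathcal{N}_{j,1}^+$, so the original feasible dual gives $\gamma_i+\tilde\xi_j^1\le c\ell_{i,j}^1$. Combining this with $\gamma_j\le c+\tilde\xi_j^1$ from \eqref{eq:fluid_relaxation_on_trip_dual_1} yields $\gamma_i+\gamma_j\le c(\ell_{i,j}^1+1)$. Lemma \ref{lemma:trip_length} then gives $\ell_{i,j}^1+1\le \ell_{i,j}^u+u$, so $\gamma_i+\gamma_j-cu\le c\ell_{i,j}^u$, as required.

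The main obstacle is this last constraint. It must be reduced to the $u=1$ case, which needs both lemmas: Lemma \ref{lemma:compatible} to guarantee that the $u=1$ constraint exists, and Lemma \ref{lemma:trip_length} to compare trip lengths. The only subtlety is using the original $\tilde\xi_j^1$ rather than the new $\xi_j^1$, so that no circularity arises. Feasibility together with the unchanged objective value then completes the argument.
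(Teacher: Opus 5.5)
Your proof is correct, but it is organized differently from the paper's.

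\textbf{The paper's route.} The paper starts from an arbitrary optimal dual solution and argues by perturbation. Whenever a constraint in \eqref{eq:fluid_relaxation_on_trip_dual_1}--\eqref{eq:fluid_relaxation_on_trip_dual_2} is slack, it lowers all $\xi_i^u$ (respectively the suffix $\xi_i^v$, $v\ge u+1$) to make that constraint tight. This leaves the objective unchanged and can only help \eqref{eq:fluid_relaxation_on_trip_dual_3}--\eqref{eq:fluid_relaxation_on_trip_dual_4}, where $\xi$ enters with a positive sign. Once all these constraints are tight, the form $\xi_i^u=\gamma_i-cu$ follows.

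\textbf{Your route.} You verify feasibility of the candidate directly and use that the objective depends only on $\boldsymbol{\gamma}$. This is cleaner, and it avoids the paper's somewhat loose ``sufficiently small $\varepsilon$'' iteration.

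\textbf{The detour in constraint \eqref{eq:fluid_relaxation_on_trip_dual_4}.} Your handling of this constraint is valid but unnecessarily roundabout. Your remark that it ``must be reduced to the $u=1$ case, which needs both lemmas'' is not right. The chaining you already did for \eqref{eq:fluid_relaxation_on_trip_dual_3}, namely \eqref{eq:fluid_relaxation_on_trip_dual_1} followed by $u-1$ applications of \eqref{eq:fluid_relaxation_on_trip_dual_2}, gives $\gamma_j-cu\le\tilde\xi_j^u$ for every $(j,u)\in\mathcal{C}$. Hence $\gamma_i+\xi_j^u\le\gamma_i+\tilde\xi_j^u\le c\ell_{i,j}^u$ follows directly from the original constraint at $(j,u)$. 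No appeal to Lemma \ref{lemma:compatible} or Lemma \ref{lemma:trip_length} is needed.

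\textbf{What each approach buys.} The pointwise domination $\xi\le\tilde\xi$ is exactly the monotonicity the paper exploits. It shows the lemma is a purely LP-structural fact, valid for arbitrary costs $\ell_{i,j}^u$ and arbitrary compatibility sets $\mathcal{N}_{j,u}^+$. Your route instead makes the lemma depend on the shortest-path geometry behind the two lemmas. It still goes through, because both lemmas are established beforehand. Also, the bound $\gamma_i+\gamma_j\le c(\ell_{i,j}^1+1)$ that you derive is the same upper bound the paper uses later in the proof of Proposition \ref{prop:policy_on_trip_wo_ratio}.
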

\begin{proof} {Proof of Lemma \ref{lemma:solution_dual}.}
    Note that if all constraints \eqref{eq:fluid_relaxation_on_trip_dual_1}-\eqref{eq:fluid_relaxation_on_trip_dual_2} are binding, we have $\xi_{i}^u(\hat{\boldsymbol{\lambda}}) = \gamma_i(\hat{\boldsymbol{\lambda}}) - cu$, $\forall (i,u)\in\mathcal{C}$. Therefore, assume by contradiction that there exists an optimal solution $\gamma_i(\hat{\boldsymbol{\lambda}})$ and $\xi_{i}^u(\hat{\boldsymbol{\lambda}})$, such that at least one constraint in \eqref{eq:fluid_relaxation_on_trip_dual_1}-\eqref{eq:fluid_relaxation_on_trip_dual_2} is not binding. Now consider the following two cases.

    For constraint \eqref{eq:fluid_relaxation_on_trip_dual_1}, if there exists $i\in[N]$, such that $\gamma_i(\hat{\boldsymbol{\lambda}}) - \xi_i^1(\hat{\boldsymbol{\lambda}}) < c$, then for all $0\leq u \leq \ell_i$, we can decrease the value of $\xi_i^u(\hat{\boldsymbol{\lambda}})$ by a sufficiently small value $\varepsilon > 0$. It is easy to verify that all other constraints \eqref{eq:fluid_relaxation_on_trip_dual_2}-\eqref{eq:fluid_relaxation_on_trip_dual_4} are still satisfied since the right-hand sides are unchanged and the left-hand sides are non-increasing with respect to the decrease in $\xi_i^u(\hat{\boldsymbol{\lambda}})$, and the objective value is unchanged, thus we can always let $\gamma_i(\hat{\boldsymbol{\lambda}}) - \xi_i^1(\hat{\boldsymbol{\lambda}}) = c$ hold for all $i\in[N]$.

    Similarily, for constraint \eqref{eq:fluid_relaxation_on_trip_dual_2}, suppose there exists $i\in[N]$ and $1 \leq u \leq \ell_i - 2$ ($u\in\mathbbm{Z}$), such that $\xi_i^u(\hat{\boldsymbol{\lambda}}) - \xi_i^{u+1}(\hat{\boldsymbol{\lambda}}) < c$. Then for all $v \geq u+1$, we decrease the value of $\xi_i^v(\hat{\boldsymbol{\lambda}})$ by a sufficiently small value $\varepsilon > 0$. All other constraints \eqref{eq:fluid_relaxation_on_trip_dual_1}, \eqref{eq:fluid_relaxation_on_trip_dual_3} and \eqref{eq:fluid_relaxation_on_trip_dual_4} are still satisfied. Therefore, we can always let $\xi_i^u(\hat{\boldsymbol{\lambda}}) - \xi_i^{u+1}(\hat{\boldsymbol{\lambda}}) = c$ hold for all $i\in[N]$ and $1 \leq u \leq \ell_i - 2$ ($u\in\mathbbm{Z}$).

    Consequently, we can always find an optimal value of $\xi_i^u$ such that all constraints \eqref{eq:fluid_relaxation_on_trip_dual_1}-\eqref{eq:fluid_relaxation_on_trip_dual_2} are binding, which leads to $\xi_{i}^u(\hat{\boldsymbol{\lambda}})=\gamma_i(\hat{\boldsymbol{\lambda}})-cu$, $\forall (i,u)\in\mathcal{C}$. $\hfill\square$
\end{proof}

\smallskip

Now, consider the case where the platform implements the matching policy \eqref{equ:dispatching_policy} under an optimal dual solution $(\gamma_i(\hat{\boldsymbol{\lambda}}),\xi_{i}^u(\hat{\boldsymbol{\lambda}}))$. Suppose a solo rider $(j,u)$ is matched with a new rider $i\in\mathcal{N}_{j,u}^+$. By the definition of the matching policy and the lower bound provided by Lemma \ref{lemma:trip_length}, the following must hold:
\begin{align} \label{equ:proof_1}
\gamma_i(\hat{\boldsymbol{\lambda}}) \geq  c\ell_{i,j}^u -\xi_j^u(\hat{\boldsymbol{\lambda}}) = c(\ell_{i,j}^u + u) -\gamma_j(\hat{\boldsymbol{\lambda}}) \geq c(\ell_{i,j}^1 + 1) -\gamma_j(\hat{\boldsymbol{\lambda}}).
\end{align}
Rearranging this yields the lower bound: $\gamma_i(\hat{\boldsymbol{\lambda}}) + \gamma_j(\hat{\boldsymbol{\lambda}}) \geq c(\ell_{i,j}^1 + 1)$.

On the other hand, since $i\in\mathcal{N}_{j,u}^+$, Lemma \ref{lemma:compatible} implies that $i\in\mathcal{N}_{j,1}^+$. Substituting the optimal dual form from Lemma \ref{lemma:solution_dual} into the dual constraint \eqref{eq:fluid_relaxation_on_trip_dual_4} for the pair $i$ and $(j,1)$, we obtain:
\begin{align*}
\gamma_i(\hat{\boldsymbol{\lambda}}) + \xi_{j}^1(\hat{\boldsymbol{\lambda}})=\gamma_i(\hat{\boldsymbol{\lambda}}) + (\gamma_j(\hat{\boldsymbol{\lambda}}) - c) \leq c \ell_{i,j}^1.
\end{align*}
Rearranging this yields the upper bound: $\gamma_i(\hat{\boldsymbol{\lambda}}) + \gamma_j(\hat{\boldsymbol{\lambda}}) \leq c (\ell_{i,j}^1+1)$.

Combining the lower and upper bounds, we must have $\gamma_i(\hat{\boldsymbol{\lambda}}) + \gamma_j(\hat{\boldsymbol{\lambda}}) = c (\ell_{i,j}^1+1)$. Consequently, all inequalities in \eqref{equ:proof_1} must hold as equalities, implying that $\ell_{i,j}^u + u = \ell_{i,j}^1 + 1$.
$\hfill\square$

\bigskip
\section{Other Results Based on Chicago Ridesharing Data} 

\subsection{Training Data Results} \label{sec:training_result}

This subsection presents additional computational results based on Chicago ridesharing data. \Cref{tab:training_results} is the corresponding training results for time window defined in \Cref{ss:chicago}, using a synthetic training dataset generated over a sufficiently long horizon with mean arrival rates estimated from the true training data.

\begin{table}[htbp]
  \centering
  \scalebox{0.62}
  {
\begin{tabular}{clrrrrrrrrrrrrrrr}
    \toprule
    \multicolumn{2}{c}{\textbf{Cost} (\$/mile)} & \multicolumn{5}{c}{$c=0.7$} & \multicolumn{5}{c}{$c=0.9$} & \multicolumn{5}{c}{$c=1.1$} \\
    \cmidrule(lr){3-7} \cmidrule(lr){8-12} \cmidrule(lr){13-17}
    \multicolumn{2}{c}{\textbf{Waiting Window} (min)} & \multicolumn{1}{c}{$0$} & \multicolumn{1}{c}{$0.5$} & \multicolumn{1}{c}{$1$} & \multicolumn{1}{c}{$1.5$} & \multicolumn{1}{c}{$2$} & \multicolumn{1}{c}{$0$} & \multicolumn{1}{c}{$0.5$} & \multicolumn{1}{c}{$1$} & \multicolumn{1}{c}{$1.5$} & \multicolumn{1}{c}{$2$} & \multicolumn{1}{c}{$0$} & \multicolumn{1}{c}{$0.5$} & \multicolumn{1}{c}{$1$} & \multicolumn{1}{c}{$1.5$} & \multicolumn{1}{c}{$2$} \\
    \cmidrule(r){1-17}
    \multirow{3}[0]{*}{\shortstack{\textbf{Profit}\\(\$/min)}} & Pre-trip & 2.631 & 4.001 & 4.824 & 5.384 & 5.796 & 0.292 & 0.763 & 1.322 & 1.767 & 2.123 & 0.000 & 0.000 & 0.000 & 0.000 & 0.037 \\
          & Combined & 4.769 & 5.282 & 5.700 & 6.075 & 6.399 & 1.449 & 1.779 & 2.090 & 2.385 & 2.651 & 0.000 & 0.084 & 0.214 & 0.338 & 0.510 \\
          & Difference & 81.2\% & 32.0\% & 18.1\% & 12.8\% & 10.4\% & 396.9\% & 133.3\% & 58.1\% & 35.0\% & 24.9\% & - & $\infty$ & $\infty$ & $\infty$ & 1287.6\% \\
    \cmidrule(r){1-17}
    \multirow{3}[0]{*}{\shortstack{\textbf{Quoted Price}\\(\$/mile)}} & Pre-trip & 0.850 & 0.795 & 0.774 & 0.762 & 0.752 & 0.950 & 0.900 & 0.869 & 0.852 & 0.841 & 1.000 & 1.000 & 1.000 & 1.000 & 0.985 \\
          & Combined & 0.798 & 0.778 & 0.764 & 0.755 & 0.747 & 0.890 & 0.870 & 0.853 & 0.840 & 0.831 & 1.000 & 0.982 & 0.959 & 0.944 & 0.928 \\
          & Difference & -6.1\% & -2.1\% & -1.2\% & -0.9\% & -0.7\% & -6.3\% & -3.4\% & -1.9\% & -1.4\% & -1.1\% & 0.0\% & -1.8\% & -4.1\% & -5.6\% & -5.8\% \\
    \cmidrule(r){1-17}
    \multirow{3}[0]{*}{\shortstack{\textbf{Payment}\\(\$/mile)}} & Pre-trip & 0.850 & 0.788 & 0.764 & 0.751 & 0.741 & 0.950 & 0.884 & 0.841 & 0.819 & 0.807 & - & - & - & - & 0.773 \\
          & Combined & 0.790 & 0.770 & 0.755 & 0.744 & 0.736 & 0.870 & 0.847 & 0.826 & 0.811 & 0.801 & - & 0.880 & 0.877 & 0.849 & 0.830 \\
          & Difference & -7.0\% & -2.3\% & -1.2\% & -0.8\% & -0.7\% & -8.5\% & -4.2\% & -1.7\% & -1.0\% & -0.8\% & - & - & - & - & 7.4\% \\
    \cmidrule(r){1-17}
    \multirow{3}[0]{*}{\shortstack{\textbf{Throughput}\\(requests/min)}} & Pre-trip & 4.238 & 5.797 & 6.397 & 6.739 & 7.006 & 1.410 & 2.830 & 3.696 & 4.175 & 4.491 & 0.000 & 0.000 & 0.000 & 0.000 & 0.412 \\
          & Combined & 5.713 & 6.277 & 6.656 & 6.936 & 7.158 & 3.095 & 3.686 & 4.167 & 4.511 & 4.762 & 0.000 & 0.498 & 1.154 & 1.577 & 2.029 \\
          & Difference & 34.8\% & 8.3\% & 4.1\% & 2.9\% & 2.2\% & 119.4\% & 30.2\% & 12.7\% & 8.0\% & 6.0\% & - & $\infty$ & $\infty$ & $\infty$ & 392.5\% \\
    \cmidrule(r){1-17}
    \multirow{2}[0]{*}{\shortstack{\textbf{Match Rate}}} & Pre-trip & 0.000 & 0.409 & 0.541 & 0.607 & 0.646 & 0.000 & 0.355 & 0.523 & 0.612 & 0.662 & - & - & - & - & 0.720 \\
          & Combined & 0.601 & 0.675 & 0.726 & 0.760 & 0.786 & 0.612 & 0.689 & 0.743 & 0.780 & 0.805 & - & 0.776 & 0.808 & 0.867 & 0.899 \\
    \cmidrule(r){1-17}
    \shortstack{\textbf{On-trip Match Portion}} & Combined & 1.000 & 0.674 & 0.454 & 0.328 & 0.249 & 1.000 & 0.729 & 0.498 & 0.355 & 0.268 & - & 0.798 & 0.587 & 0.401 & 0.276 \\
    \cmidrule(r){1-17}
    \multirow{2}[0]{*}{\shortstack{\textbf{Cost Efficiency}}} & Pre-trip & 0.000 & 0.109 & 0.165 & 0.198 & 0.222 & 0.000 & 0.086 & 0.154 & 0.194 & 0.220 & - & - & - & - & 0.360 \\
          & Combined & 0.160 & 0.190 & 0.216 & 0.235 & 0.253 & 0.160 & 0.188 & 0.213 & 0.234 & 0.252 & - & 0.247 & 0.244 & 0.267 & 0.290 \\
    \cmidrule(r){1-17}
    \multirow{2}[0]{*}{\shortstack{\textbf{Average Detour Rate}\\(matched requests)}} & Pre-trip & - & 0.095 & 0.079 & 0.071 & 0.064 & - & 0.103 & 0.083 & 0.072 & 0.066 & - & - & - & - & 0.000 \\
          & Combined & 0.099 & 0.094 & 0.089 & 0.083 & 0.078 & 0.098 & 0.094 & 0.089 & 0.084 & 0.079 & - & 0.054 & 0.076 & 0.077 & 0.075 \\
    \bottomrule
    \end{tabular}
  }
  \caption{Training results on Chicago data.}
  \label{tab:training_results}
\end{table}

\subsection{Regional Pricing Effects} \label{ss:match_types}

In this section, to understand the reason behind the small price increase around the densest downtown zones in \Cref{fig:Chicago_price_change}, we compare three policies: (1) pre-trip matching and the associated pricing policy, (2) combined matching under the pre-trip pricing policy, and (3) combined matching and the associated pricing policy. We seperate the small downtown region where prices increase versus the outskirts as shown in \Cref{fig:dt_region}. \Cref{fig:ce} presents the cost efficiency for each region achieved by each of the three policies. We also define five types of matching outcomes for any request: (1) matched with a downtown rider pre-trip, (2) matched with an outskirts rider pre-trip, (3) matched with a downtown rider on-trip, (4) matched with an outskirts rider on-trip, or (5) unmatched and dispatched solo. \Cref{fig:match_distribution_type} presents the matching outcome distributions over downtown and outskirts requests for each of the three policies.

Under the pre-trip matching and pricing policies (first column in \Cref{fig:ce,fig:downtown,fig:outer}), downtown requests achieve high cost efficiency (0.24) and are mainly matched pre-trip with other downtown requests ($48.5\%$ of all downtown requests). Meanwhile, outskirts requests achieve a lower cost efficiency (0.13) despite a high pre-trip match rate ($35.3\%$ with other outskirts riders). In other words, pre-trip matching alone is highly efficient downtown, but inefficient in the outskirts.

Fixing the pricing policy and adding on-trip matching (second column in \Cref{fig:ce,fig:downtown,fig:outer}), outskirts riders benefit significantly (cost efficiency increases from 0.13 to 0.18), whereas downtown riders only see modest improvement (cost efficiency increases only from 0.24 to 0.25). \Cref{fig:match_distribution_type} illustrates that rather than simply adding on new matching opportunities, the addition of on-trip matching in fact \emph{suppresses} pre-trip matching: a downtown (or outskirts) request is less likely to be matched pre-trip with another downtown (or outskirts) request, and more likely to be matched on-trip (\Cref{fig:match_distribution_type}). Then, re-optimizing the pricing policy (third column in \Cref{fig:ce,fig:downtown,fig:outer}) reinforces this shift. The (slight) increase in downtown prices further reduces pre-trip matching between downtown riders, and shifts the downtown riders towards more on-trip matches with outskirts riders. Meanwhile, in the outskirts, the price decrease is driven by the dominant effect of more efficient on-trip matches.

\begin{figure}[htbp]
     \centering
     \begin{subfigure}[b]{0.25\textwidth}
        \begin{center}  \includegraphics[height=6cm]{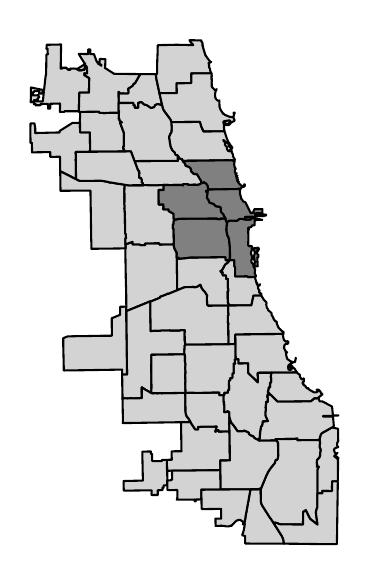}
          \caption{Region Split}
          \label{fig:dt_region}
        \end{center}
     \end{subfigure}
     \begin{subfigure}[b]{0.6\textwidth}
        \begin{center}  \includegraphics[height=6cm]{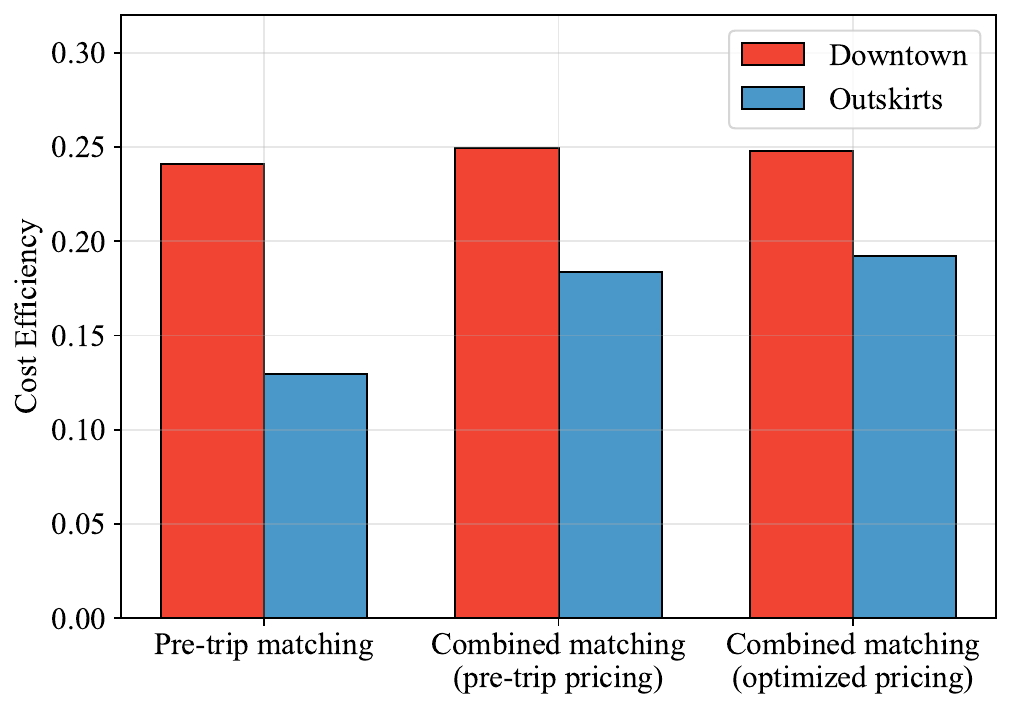}
          \caption{Cost Efficiency Distribution}
          \label{fig:ce}
        \end{center}
     \end{subfigure}
    \caption{Downtown/outskirts region split and the cost efficiency distribution in downtown versus outskirts across the three policies ($c=0.7$, waiting period is $1$ min).}
    \label{fig:chicago_heterogeneity}
\end{figure}

\begin{figure}[htbp]
     \centering
     \begin{subfigure}[b]{0.35\textwidth}
        \begin{center}  \includegraphics[height=6cm]{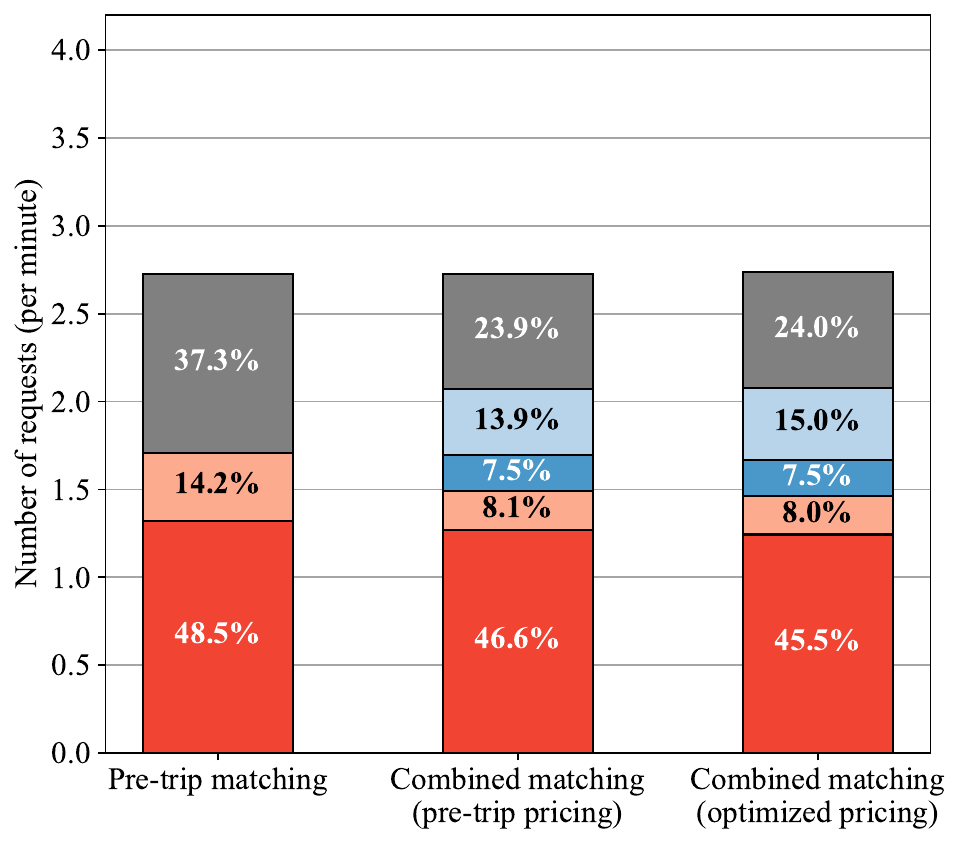}
          \caption{Downtown}
          \label{fig:downtown}
        \end{center}
     \end{subfigure}
     \hfill
     \begin{subfigure}[b]{0.58\textwidth}
        \begin{center}  \includegraphics[height=6cm]{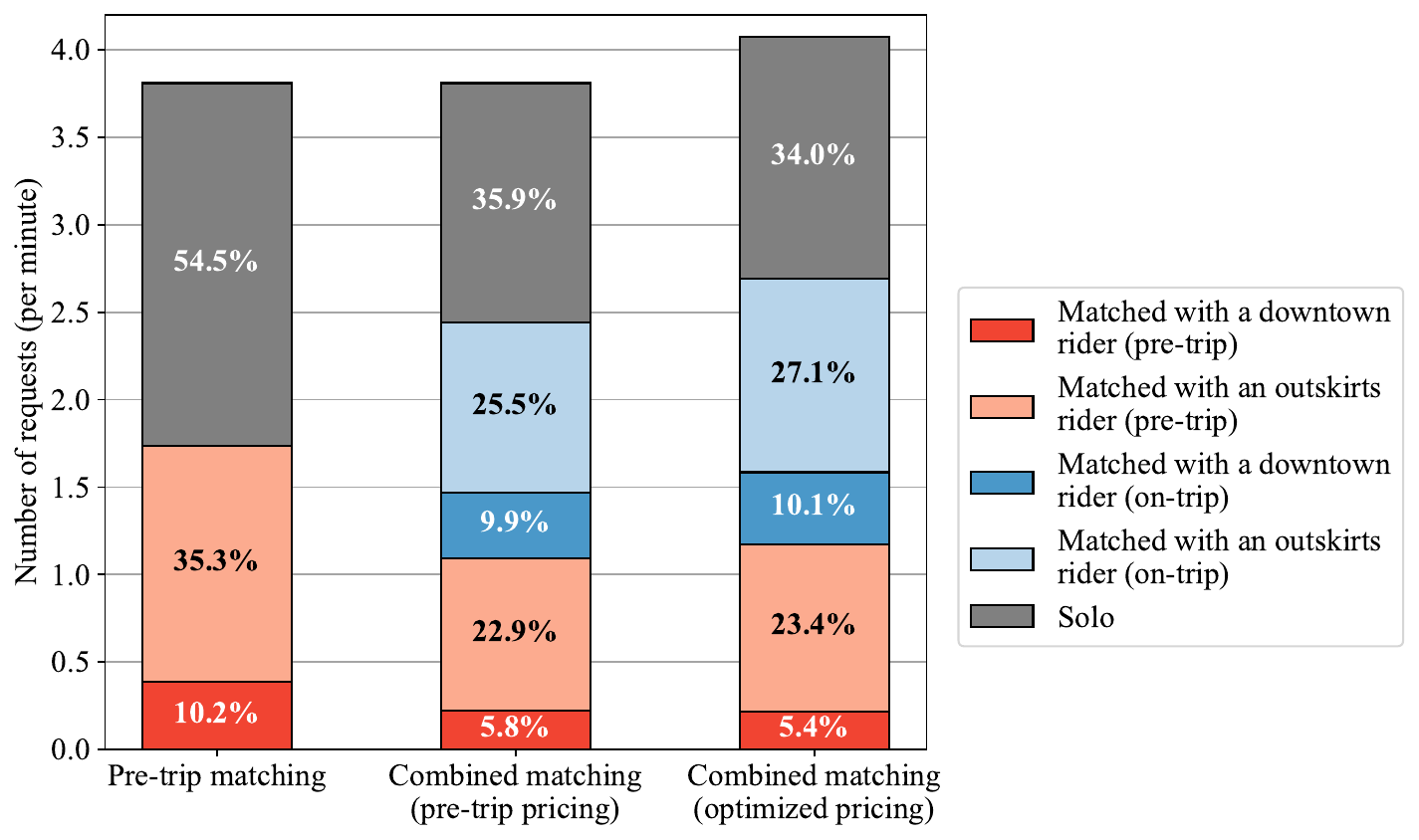}
          \caption{Outskirts}
          \label{fig:outer}
        \end{center}
     \end{subfigure}
    \caption{Matching outcome distribution over downtown/outskirts requests ($c=0.7$, waiting window = 1 min). 
    }
    \label{fig:match_distribution_type}
\end{figure}

\end{APPENDICES}

\end{document}